\newtheorem{lemma}{Lemma}
\newtheorem{thm}{Theorem}
\newtheorem{corollary}{Corollary}
\newtheorem{remark}{Remark}
\numberwithin{equation}{section}
\begin{document}

\leftline{ \scriptsize \it  }
\title[]
{Generalized Baskakov Kantorovich Operators}
\maketitle

\begin{center}
{\bf P. N. Agrawal$^1$ and Meenu Goyal$^2$}
\vskip0.2in
$^{1,2}$Department of Mathematics\\
Indian Institute of Technology Roorkee\\
Roorkee-247667, India\\
\vskip0.2in

 $^1$pna\_{}iitr@yahoo.co.in
$^2$meenu.goyaliitr@yahoo.com
\end{center}
\begin{abstract}
In this paper, we construct generalized Baskakov Kantorovich operators. We
establish some direct results and then study weighted approximation,
simultaneous approximation and statistical convergence properties for these
operators. Finally, we obtain the rate of convergence for functions having a
derivative coinciding almost everywhere with a function of bounded variation
for these operators.\\
Keywords: Baskakov Kantorovich, rate of convergence, modulus of continuity,
simultaneous approximation, asymptotic formula.\\
Mathematics Subject Classification(2010): 41A25, 26A15, 41A28.
\end{abstract}

\section{introduction}
In \cite{KON},  for $f\in L_1[0,1]$ (the class of Lebesgue integrable
functions on $[0,1]$), Kantorovich introduced the operators
\begin{eqnarray*}
K_n(f;x)=(n+1)\sum_{k=0}^{n}p_{n,k}(x)\int_{0}^{1}\chi_{n,k}(t)f(t) dt,
\end{eqnarray*}
where $p_{n,k}(x)={n\choose k}x^{k}(1-x)^{n-k},\,\, x\in [0,1]$
is the Bernstein basis function and $\chi_{n,k}(t)$ is the characteristic
function
of the interval $\bigg[\frac{k}{n+1},\frac{k+1}{n+1}\bigg].$ Many authors studied the approximation properties of these operators.
In \cite{BTZ}, Butzer proved Voronovskaja type theorem for the Kantorovich
polynomials, then in \cite{ABL}, Abel derived the complete asymptotic expansion
and studied simultaneous approximation, rate of convergence and asymptotic
expansion for derivatives of these polynomials. Mahmudov and Sabancigil \cite{NMS} investigated a $q-$analogue of the operators $K_n$ and
studied the local and global results and a Voronovskaja type theorem for the case $0<q<1.$\\
Besides the applications of $q-$calculus in approximation theory, the approximation of functions by linear positive operators using statistical convergence is also an active area of research. The concept of statistical convergence was introduced by Fast \cite{FST}. Gadjiev and Orhan \cite{ADG} examined it for the first time for linear positive operators. After that many researchers have investigated the statistical convergence properties for several sequences and classes of linear positive operators (cf. \cite{AAD, SA, ODO, ODC, EDS, ADG, VGS, NIG,  KOK, NIM, MML, MA, ODS, EYO}).

The rate of convergence for functions of bounded variation and for functions with a derivative of bounded variation is an another important topic of research. Cheng \cite{FC} estimated the rate of convergence of Bernstein polynomials for functions of bounded variation. Guo \cite{SG} studied it for the Bernstein-Durrmeyer polynomials by using Berry Esseen theorem. Srivastava and Gupta \cite{HG1} introduced a new sequence of linear positive operators which includes several well known operators as its special cases and investigated the rate of convergence of these operators by means of the decomposition technique for functions of bounded variation. Subsequently, the rate of convergence for functions of bounded variation and for functions with a derivative of bounded variation was examined for several sequences of linear positive operators. In this direction, the significant contributions have been made by (cf. \cite{VG1, NI, HR, MA1} and \cite{HMG} etc.).\\

Several authors have proposed the Kantorovich-type modification of different
linear positive operators and studied their approximation properties related
with the operators. In \cite{VRD} and \cite{NIM}, researchers introduced the
generalizations of the q-Baskakov-Kantorovich operators and studied their
weighted statistical approximation properties.\\
In 1998, Mihesan \cite{MSN} introduced the generalized Baskakov operators $B_{n,a}^*$ defined as
\begin{eqnarray*}
 B_{n,a}^*(f;x) =\sum_{k=0}^{\infty}W_{n,k}^a(x)f\bigg(\frac{k}{n+1}\bigg),
\end{eqnarray*}
where\\
$W_{n,k}^{a}(x)=e^{\frac{-ax}{1+x}}\frac{P_{k}(n,a)}{k!}\frac{x^k}{(1+x)^{n+k}}
, P_k(n,a)=\displaystyle\sum_{i=0}^{k}{k\choose i}(n)_ia^{k-i},$
and $(n)_0=1, (n)_i=n(n+1)\cdot\cdot\cdot(n+i-1),$ for $i\geq 1.$
In \cite{ERN}, Erencin defined
the Durrmeyer type modification of these operators as
\begin{eqnarray*}
L_n^a(f;x)=\sum_{k=0}^{\infty}W_{n,k}^{a}(x)\frac{1}{B(k+1,n)}\int_{0}^{\infty}
\frac{t^k}{(1+t)^{n+k+1}}f(t)dt,  x\geq 0,
\end{eqnarray*}
and discussed some approximation properties. Very recently, Agrawal et al.
\cite{PVS} studied the simultaneous approximation and approximation of functions
having derivatives of bounded variation by these operators.\\

Later, Agrawal et al. \cite{PVA} considered a new kind of Durrmeyer type modification of the generalized
Baskakov operators having weights of Sz$\acute{a}$sz basis function
and studied some approximation properties of these operators.\\
 Motivated by the above work, we consider the Kantorovich modification of
 generalized Baskakov  operators for the function $f$ defined on
$C_{\gamma}[0,\infty):= \{f\in C[0,\infty): |f(t)| \leq M(1+t^{\gamma}), t\geq 0$ for some $\gamma>0\}$ as follows :
\begin{eqnarray}\label{e1}
K_n^a(f;x)=(n+1)\sum_{k=0}^{\infty}W_{n,k}^{a}(x)\int_{\frac{k}{n+1}}^{\frac{k+1}
{n+1}}f(t)dt,\,\,a\geq 0.
\end{eqnarray}
As a special case, for $a=0$ these operators include the well known
Baskakov-Kantorovich operators (see e.g. \cite{ZNG}).
The purpose of this paper is to study some local direct results, degree of
approximation for functions in a Lipschitz type space, approximation of
continuous functions with polynomial growth, simultaneous approximation,
statistical convergence and the approximation of absolutely continuous functions
having a derivative coinciding almost everywhere with a function of bounded
variation by the operators defined in (\ref{e1}).\\
Throughout this paper, $M$ denotes a constant not necessary the same at each occurrence.

\section{\textbf{Moment Estimates}}
For $r\in \mathbb{N}_0=\mathbb{N}\cup \{0\},$ the $r$th order moment of the
generalized Baskakov operators $B_{n,a}^*$ is defined as
\begin{eqnarray*}
\upsilon_{n,r}^a(x):= B_{n,a}^*(t^r;x) =\sum_{k=0}^{\infty}W_{n,k}^a(x)\bigg
(\frac{k}{n+1}\bigg)^{r}
\end{eqnarray*}
and the central moment of $r$th order for the operators $B_{n,a}^*$ is defined as
\begin{eqnarray*}
\mu_{n,r}^a(x) := B_{n,a}^*((t-x)^r;x) = \sum_{k=0}^{\infty}W_{n,k}^a(x)\bigg
(\frac{k}{n+1}-x\bigg)^{r}.
\end{eqnarray*}

\begin{lemma}\label{lm4}
For the function $\upsilon_{n,r}^a(x),$ we have\\
$$\upsilon_{n,0}^a(x)=1,\,\,\upsilon_{n,1}^a(x)=\frac{1}{n+1}\bigg(nx+\frac{ax}
{1+x}\bigg)$$
and
\begin{eqnarray}\label{eq3}
x(1+x)^2(\upsilon_{n,r}^a(x))^{\prime}=(n+1)(1+x)\upsilon_{n,r+1}^a(x)-(a+n(1+x))
x\,\upsilon_{n,r}^a(x).
\end{eqnarray}
Consequently,  for each $x\in[0,\infty)$ and $r\in \mathbb{N},$
\begin{eqnarray}\label{p1}
\upsilon_{n,r}^a(x)=x^r+n^{-1}(q_r(x,a)+o(1))
\end{eqnarray}
where $q_r(x,a)$ is a rational function of $x$ depending on the parameters
of $a$ and $r.$
\end{lemma}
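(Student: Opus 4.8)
The plan is to base everything on the exponential generating function of the coefficients $P_k(n,a)$. Since $P_k(n,a)=\sum_{i=0}^{k}\binom{k}{i}(n)_i a^{k-i}$ is the binomial convolution of the sequences $\big((n)_i\big)$ and $\big(a^j\big)$, and since $\sum_{i\ge0}(n)_i\frac{u^i}{i!}=(1-u)^{-n}$ while $\sum_{j\ge0}a^j\frac{u^j}{j!}=e^{au}$, multiplying the two series gives
$$\sum_{k=0}^{\infty}\frac{P_k(n,a)}{k!}u^k=\frac{e^{au}}{(1-u)^n}.$$
Substituting $u=\frac{x}{1+x}$ (so that $1-u=\frac{1}{1+x}$ and $\frac{x^k}{(1+x)^{n+k}}=(1-u)^n u^k$) and using $e^{-ax/(1+x)}=e^{-au}$, the exponential and the power of $(1-u)$ cancel against the prefactor of $W_{n,k}^{a}$, yielding $\upsilon_{n,0}^a(x)=\sum_k W_{n,k}^a(x)=1$ at once. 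For $\upsilon_{n,1}^a$ I would apply the operator $u\frac{d}{du}$ to the generating function before substituting, which introduces the factor $u\big(a+\frac{n}{1-u}\big)$; evaluating at $u=\frac{x}{1+x}$ gives $\sum_k k\,W_{n,k}^a(x)=nx+\frac{ax}{1+x}$, and dividing by $n+1$ produces the stated formula.

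For the recurrence (\ref{eq3}) I would differentiate the basis function directly. A short computation of the logarithmic derivative of $W_{n,k}^a$ gives
$$x(1+x)^2\,(W_{n,k}^a(x))'=W_{n,k}^a(x)\big[k(1+x)-x\big(a+n(1+x)\big)\big].$$
Multiplying by $\big(\frac{k}{n+1}\big)^r$ and summing over $k$, the right-hand side splits into two pieces: the term carrying the extra factor $k$ becomes $(n+1)\upsilon_{n,r+1}^a(x)$ once one writes $k\big(\frac{k}{n+1}\big)^r=(n+1)\big(\frac{k}{n+1}\big)^{r+1}$, while the remaining term is $-x\big(a+n(1+x)\big)\upsilon_{n,r}^a(x)$; this is exactly (\ref{eq3}). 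Term-by-term differentiation is justified by the local uniform convergence of the defining series on compact subsets of $[0,\infty)$.

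Finally, (\ref{p1}) follows by induction on $r$ from (\ref{eq3}) rewritten as
$$\upsilon_{n,r+1}^a(x)=\frac{x(1+x)}{n+1}\,(\upsilon_{n,r}^a(x))'+\frac{nx}{n+1}\,\upsilon_{n,r}^a(x)+\frac{ax}{(n+1)(1+x)}\,\upsilon_{n,r}^a(x).$$
The base case is immediate from $\upsilon_{n,1}^a(x)=x+\frac{1}{n+1}\big(\frac{ax}{1+x}-x\big)$, so $q_1(x,a)=\frac{ax}{1+x}-x$. Assuming $\upsilon_{n,r}^a(x)=x^r+n^{-1}(q_r(x,a)+o(1))$ with $q_r$ rational, the dominant contribution comes from $\frac{nx}{n+1}\upsilon_{n,r}^a$, whose leading term is $x^{r+1}$, and collecting the $n^{-1}$ coefficients from all three terms yields $q_{r+1}(x,a)=x\,q_r(x,a)-x^{r+1}+r\,x^r(1+x)+\frac{a\,x^{r+1}}{1+x}$, which is again rational since $q_r$ is. The step requiring the most care is the claim $(\upsilon_{n,r}^a)'(x)=r x^{r-1}+O(n^{-1})$ needed for the first term; this is legitimate because each $\upsilon_{n,r}^a$ is, by finitely many applications of the recurrence, a genuine rational function of $x$ with coefficients rational in $n$, so its expansion in powers of $n^{-1}$ may be differentiated term by term. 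I expect this differentiation-of-the-remainder point, rather than the algebra, to be the only genuine obstacle.
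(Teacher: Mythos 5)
Your proof is correct and follows essentially the same route as the paper: the recurrence (\ref{eq3}) comes from the logarithmic-derivative identity $x(1+x)^2\big(W_{n,k}^a(x)\big)'=\big((k-nx)(1+x)-ax\big)W_{n,k}^a(x)$ (your form $k(1+x)-x\big(a+n(1+x)\big)$ is the same expression), and (\ref{p1}) is then proved by induction on $r$ from (\ref{eq3}) with base case $r=1$. The paper leaves the low-order moments, the induction step, and the term-by-term differentiation unjustified, whereas you supply the generating-function computation, the explicit recursion $q_{r+1}(x,a)=x\,q_r(x,a)-x^{r+1}+r\,x^r(1+x)+\frac{a x^{r+1}}{1+x}$, and the rationality argument legitimizing differentiation of the expansion; these are elaborations of the same method, not a different one.
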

\begin{proof}
The values of $\upsilon_{n,r}^a(x), r=0,1$ can be found by a simple computation.
Differentiating $\upsilon_{n,r}^a(x)$ with respect to $x$ and using the relation\\
$$x(1+x)^2\bigg(\frac{d}{dx}W_{n,k}^a(x)\bigg)=\bigg((k-nx)(1+x)-ax\bigg)
W_{n,k}^a(x),$$\\
we can easily get the recurrence relation (\ref{eq3}). To prove the last assertion,
we note that the equation (\ref{p1}) clearly holds for $r=1.$ The rest of the proof
follows by using (\ref{eq3}) and induction on $r.$
\end{proof}

\begin{lemma}\label{l1}
For the function $\mu_{n,r}^a(x),$ we have\\
$$\mu_{n,0}^a(x)=1,\,\, \mu_{n,1}^a(x)=\frac{1}{n+1}\bigg(-x+\frac{ax}{(1+x)}\bigg)$$
and\\
\begin{eqnarray}\label{eq1}
x(1+x)^2(\mu_{n,r}^a(x))^{\prime}=(n+1)(1+x)\mu_{n,r+1}^a(x)-ax
\mu_{n,r}^a(x)-rx(1+x)^2\mu_{n,r-1}^a(x), r\in \mathbb{N}.
\end{eqnarray}
Consequently,\\
\begin{enumerate}[(i)]
\item $\mu_{n,r}^a(x)$ is a rational function of x depending on the parameters a and r;
\\ \item for each $x\in (0,\infty)$ and $r\in \mathbb{N}_{0},\,\mu_{n,r}^a(x)
=O(n^{-[(r+1)/2]}),$ where $[\alpha]$ denotes the integer part of $\alpha.$
\end{enumerate}
\end{lemma}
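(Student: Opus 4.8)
The plan is to follow the same differentiation technique used for Lemma \ref{lm4}, transferring it from the raw moments $\upsilon_{n,r}^a$ to the central moments $\mu_{n,r}^a$. First I would dispose of the initial values: $\mu_{n,0}^a(x)=\upsilon_{n,0}^a(x)=1$ is immediate, while $\mu_{n,1}^a(x)=\upsilon_{n,1}^a(x)-x=\frac{1}{n+1}(nx+\frac{ax}{1+x})-x=\frac{1}{n+1}(-x+\frac{ax}{1+x})$ follows after combining over $n+1$, using the values from Lemma \ref{lm4}.

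For the recurrence (\ref{eq1}), I differentiate $\mu_{n,r}^a(x)=\sum_k W_{n,k}^a(x)(\frac{k}{n+1}-x)^r$ term by term. The product rule produces two pieces: the derivative of $W_{n,k}^a$ times $(\frac{k}{n+1}-x)^r$, and $W_{n,k}^a$ times $-r(\frac{k}{n+1}-x)^{r-1}$, the latter summing to $-r\mu_{n,r-1}^a(x)$. After multiplying by $x(1+x)^2$ and invoking the stated identity for $x(1+x)^2(W_{n,k}^a)'$, the decisive algebraic step is to rewrite $k-nx$ in terms of the centered variable $u=\frac{k}{n+1}-x$, namely $k-nx=(n+1)u+x$, so that $(k-nx)(1+x)-ax$ splits into a piece proportional to $u$ (producing $\mu_{n,r+1}^a$ after multiplication by the extra factor $u^r$) and a piece independent of $u$ (producing a multiple of $\mu_{n,r}^a$). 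Collecting terms yields the three-term recurrence (\ref{eq1}).

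Assertion (i) then follows by a two-step induction on $r$: solving the recurrence for the highest-index moment gives $\mu_{n,r+1}^a=\frac{1}{(n+1)(1+x)}[x(1+x)^2(\mu_{n,r}^a)'+ax\mu_{n,r}^a+rx(1+x)^2\mu_{n,r-1}^a]$, and since the derivative of a rational function is rational and the inductive hypothesis supplies $\mu_{n,r}^a$ and $\mu_{n,r-1}^a$ as rational functions of $x$, so is $\mu_{n,r+1}^a$; the base cases $r=0,1$ are the explicit formulas above. For (ii) I run the same induction while tracking orders in $n$: with $\mu_{n,r}^a=O(n^{-[(r+1)/2]})$ and $\mu_{n,r-1}^a=O(n^{-[r/2]})$, the overall factor $\frac{1}{n+1}$ in the solved recurrence reduces the dominant surviving term $rx(1+x)^2\mu_{n,r-1}^a$ to order $n^{-1-[r/2]}$, while the $ax\mu_{n,r}^a$ contribution decays at least as fast. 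The arithmetic identity $[(r+2)/2]=1+[r/2]$ shows that $n^{-1-[r/2]}$ is exactly the target order $n^{-[((r+1)+1)/2]}$, closing the step.

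The main obstacle is the derivative term $(\mu_{n,r}^a)'$ on the right of the recurrence: to keep the induction on orders closed I must know that differentiation in $x$ does not worsen the power of $n$, i.e. that $(\mu_{n,r}^a)'=O(n^{-[(r+1)/2]})$ as well. This is where (i) earns its keep: since $\mu_{n,r}^a(x)$ is rational in $x$ with the $n$-dependence entering as an overall factor $n^{-[(r+1)/2]}$ multiplying a rational function whose coefficients stay bounded in $n$, the $x$-derivative preserves the order. I would therefore strengthen the inductive statement to carry both $\mu_{n,r}^a$ and $(\mu_{n,r}^a)'$ at the claimed order, uniformly on compact subsets of $(0,\infty)$ so that the bounded rational prefactors cause no trouble, which completes the argument.
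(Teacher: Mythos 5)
Your overall strategy coincides with the paper's own (the paper merely says the proof ``follows along the lines similar to Lemma \ref{lm4}'' and then invokes induction), and your order-counting for (ii) is sound; but there is one concrete problem at the heart of your argument. If you actually carry out the collection of terms you describe, you do \emph{not} obtain (\ref{eq1}) as printed. With $u=\frac{k}{n+1}-x$, so that $k-nx=(n+1)u+x$, the constant-in-$u$ part of $(k-nx)(1+x)-ax$ is $x(1+x)-ax$, and nothing cancels the contribution $x(1+x)\mu_{n,r}^a$. What your derivation proves is
\begin{equation*}
x(1+x)^2\big(\mu_{n,r}^a(x)\big)'=(n+1)(1+x)\mu_{n,r+1}^a(x)+\big(x(1+x)-ax\big)\mu_{n,r}^a(x)-rx(1+x)^2\mu_{n,r-1}^a(x),
\end{equation*}
whose middle coefficient is $x(1+x-a)$, not $-ax$. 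Indeed (\ref{eq1}) as printed is false: testing it at $r=1$ with $\mu_{n,1}^a(x)=\frac{1}{n+1}\big(-x+\frac{ax}{1+x}\big)$ and $\mu_{n,2}^a(x)=\frac{1}{(n+1)^2}\big\{nx(1+x)+x^2+\frac{ax(1-2x)}{1+x}+\frac{a^2x^2}{(1+x)^2}\big\}$ (the latter consistent with the paper's own $u_{n,2}^a$ via $u_{n,2}^a=\mu_{n,2}^a+\frac{1}{n+1}\mu_{n,1}^a+\frac{1}{3(n+1)^2}$), the two sides differ by $\frac{x^2(a-1-x)}{n+1}$, which is not identically zero. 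So the sentence ``collecting terms yields the three-term recurrence (\ref{eq1})'' is precisely where the algebra was left undone; had you done it, you would have exposed a typo in the paper rather than reproduced it. Since the corrected coefficient is still independent of $n$, consequences (i) and (ii) are unaffected, but a proof of the lemma as stated cannot exist.

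The second issue is your treatment of the derivative term, a genuine subtlety that the paper passes over in silence and that you rightly isolate. However, the fix as you state it --- strengthening the induction ``to carry both $\mu_{n,r}^a$ and $(\mu_{n,r}^a)'$ at the claimed order'' --- does not close: to bound $(\mu_{n,r+1}^a)'$ you must differentiate the solved recurrence, which requires $(\mu_{n,r}^a)''$, and that is not supplied by the strengthened hypothesis. What does close the induction is the structural observation you make in the preceding sentence: prove inductively that $\mu_{n,r}^a(x)$ is a rational function of $x$ whose denominator is a fixed power of $(1+x)$, independent of $n$, and whose numerator coefficients are rational functions of $n$ of order $O(n^{-[(r+1)/2]})$. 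This form is stable under the recurrence and under $\frac{d}{dx}$, and it yields the claimed order for $\mu_{n,r}^a$ together with all of its $x$-derivatives (locally uniformly in $x$), which is exactly what the inductive step consumes. Alternatively, carry $(\mu_{n,r}^a)^{(s)}=O(n^{-[(r+1)/2]})$ for \emph{all} $s\geq 0$ as the hypothesis. Either repair makes your argument complete; as written, it is open at exactly the step you flagged.
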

\begin{proof}
Proof of this lemma follows along the lines similar to Lemma \ref{lm4}. The
consequences (i) and (ii) follow from (\ref{eq1}) by using induction on $r.$
\end{proof}

\begin{corollary}\label{c2}
If $\mu_{n,r}^{*a}(x)=\displaystyle\sum_{k=0}^{\infty}W_{n,k}^a(x)\bigg(\dfrac{k}{n}-x\bigg)^r,$
then $\mu_{n,r}^{*a}(x)=O\bigg(\dfrac{1}{n^{[\frac{r+1}{2}]}}\bigg).$
\end{corollary}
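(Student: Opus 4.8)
The plan is to reduce the modified central moment $\mu_{n,r}^{*a}$ to the central moments $\mu_{n,j}^a$, which are already estimated in Lemma \ref{l1}(ii). First I would record the elementary identity
$$\frac{k}{n}-x=\left(1+\frac{1}{n}\right)\left(\frac{k}{n+1}-x\right)+\frac{x}{n},$$
which follows by writing $\frac{k}{n}-\frac{k}{n+1}=\frac{k}{n(n+1)}$ together with $\frac{k}{n+1}=\left(\frac{k}{n+1}-x\right)+x$. This expresses the ``new'' deviation $\frac{k}{n}-x$ as an affine function of the ``old'' deviation $\frac{k}{n+1}-x$, with coefficients that are bounded (indeed the first tends to $1$) and $O(n^{-1})$ respectively.

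Next I would raise this identity to the $r$th power by the binomial theorem, multiply by $W_{n,k}^a(x)$, and sum over $k$. Invoking the linearity of $B_{n,a}^*$ and the definition of $\mu_{n,j}^a(x)$, this yields
$$\mu_{n,r}^{*a}(x)=\sum_{j=0}^{r}{r\choose j}\left(1+\frac{1}{n}\right)^{j}\left(\frac{x}{n}\right)^{r-j}\mu_{n,j}^a(x),$$
so that $\mu_{n,r}^{*a}$ is a finite linear combination of the $\mu_{n,j}^a$ with explicit, $n$-dependent coefficients.

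Then I would estimate each summand. Since $x$ is fixed and $\left(1+\frac{1}{n}\right)^{j}$ stays bounded, the coefficient of $\mu_{n,j}^a(x)$ is $O(n^{-(r-j)})$, while Lemma \ref{l1}(ii) gives $\mu_{n,j}^a(x)=O(n^{-[(j+1)/2]})$; hence the $j$th term is $O\left(n^{-(r-j)-[(j+1)/2]}\right)$. It remains to identify the slowest-decaying term, that is, to minimize the exponent $(r-j)+[(j+1)/2]$ over $0\le j\le r$. As $j$ increases by one, $(r-j)$ decreases by $1$ while $[(j+1)/2]$ increases by $0$ or $1$, so this exponent is non-increasing in $j$ and attains its minimum at $j=r$, where it equals $[(r+1)/2]$. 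Summing the finitely many terms, this dominant contribution gives $\mu_{n,r}^{*a}(x)=O\left(n^{-[(r+1)/2]}\right)$, as required.

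The binomial expansion and the bookkeeping are routine; the only step that requires genuine care — and which constitutes the real content of the argument — is the monotonicity of the exponent $(r-j)+[(j+1)/2]$, since it is this fact that pins the order to the $j=r$ term and rules out any lower term contributing a larger order. I expect this minimization, though elementary, to be the main obstacle.
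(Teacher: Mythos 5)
Your proposal is correct and is essentially the paper's own proof: your identity $\frac{k}{n}-x=\big(1+\frac{1}{n}\big)\big(\frac{k}{n+1}-x\big)+\frac{x}{n}$ is exactly the paper's expansion of $(k-nx)^r$ in powers of $(n+1)\big(\frac{k}{n+1}-x\big)$, followed by the same binomial expansion, the same appeal to Lemma \ref{l1}(ii), and the same identification of the $j=r$ term as dominant. The only cosmetic difference is bookkeeping: the paper bounds each term as $\frac{1}{n^r}O\big(n^{[j/2]}\big)$ and takes the largest exponent $[r/2]$, whereas you minimize $(r-j)+[(j+1)/2]$ directly; these are the same computation.
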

\begin{proof}
From Lemma \ref{l1}, for each $x\in[0,\infty)$ we may write
\begin{eqnarray*}
\mu_{n,r}^{*a}(x)
&=&\frac{1}{n^r}\sum_{k=0}^{\infty}W_{n,k}^a(x)(k-nx)^r\\
&=& \frac{1}{n^r}\sum_{j=0}^{r}{r\choose j}x^{r-j}(n+1)^j\bigg\{\sum_{k=0}^{\infty}
\bigg(\frac{k}{n+1}-x\bigg)^jW_{n,k}^a(x)\bigg\}\\
&=&\frac{1}{n^r} \sum_{j=0}^{r}(n+1)^jO
\bigg(\frac{1}{n^{[\frac{j+1}{2}]}}\bigg)
\\&=&\frac{1}{n^r}\sum_{j=0}^{r}O(n^{[j/2]})
= O\bigg(\frac{1}{n^{[\frac{r+1}{2}]}}\bigg).
\end{eqnarray*}
\end{proof}

\begin{lemma}\label{lm3}
For every $x\in (0,\infty)$ and $r\in \mathbb{N}_0,$ there exist
polynomials $q_{i,j,r}(x)$ in $x$ independent of $n$ and $k$ such that
\begin{eqnarray*}
\frac{d^r}{dx^r}W_{n,k}^a(x)=W_{n,k}^a(x)\displaystyle\sum_{\substack{2i+j
\leq r\\i,j\geq 0}} n^i(k-nx)^j\frac{(q_{i,j,r}(x))}{(p(x))^r},
\end{eqnarray*}
where $p(x)=x(1+x)^2.$
\end{lemma}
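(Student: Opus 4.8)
The plan is to argue by induction on $r$, the essential input being the first-order relation
$$p(x)\frac{d}{dx}W_{n,k}^a(x)=\big((k-nx)(1+x)-ax\big)W_{n,k}^a(x)$$
already recorded in the proof of Lemma \ref{lm4}, where $p(x)=x(1+x)^2$. The base case $r=0$ is immediate: the sum collapses to the single index $i=j=0$, the denominator $(p(x))^0=1$, and one takes $q_{0,0,0}(x)\equiv 1$.

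For the inductive step I would assume the representation for some $r$ and differentiate both sides once more, applying the product rule to $W_{n,k}^a(x)$ and to the sum. The derivative of $W_{n,k}^a(x)$ is rewritten by the displayed first-order relation, which multiplies a generic term $n^i(k-nx)^j q_{i,j,r}(x)/(p(x))^r$ by $\big((k-nx)(1+x)-ax\big)/p(x)$. This produces two contributions: one with exponent $(i,j+1)$ and polynomial factor $(1+x)q_{i,j,r}(x)$, and one with exponent $(i,j)$ and factor $-ax\,q_{i,j,r}(x)$, both now over $(p(x))^{r+1}$. Differentiating the sum term by term adds a contribution from $\frac{d}{dx}(k-nx)^j=-nj(k-nx)^{j-1}$, giving exponent $(i+1,j-1)$ with factor $-j\,p(x)q_{i,j,r}(x)$, and a contribution from $\frac{d}{dx}\big(q_{i,j,r}(x)/(p(x))^r\big)=\big(q_{i,j,r}'(x)p(x)-r\,q_{i,j,r}(x)p'(x)\big)/(p(x))^{r+1}$, which retains the exponent $(i,j)$ with a polynomial numerator.

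The key observation that closes the induction is that each of these four operations changes the weight $2i+j$ by at most one: raising $j$ by one (from the logarithmic derivative) and the shift $(i,j)\mapsto(i+1,j-1)$ (from differentiating $(k-nx)^j$) each increase $2i+j$ by exactly one, while the remaining two leave it unchanged. Hence every resulting term still satisfies $2i+j\le r+1$ and carries the denominator $(p(x))^{r+1}$, so collecting like powers of $n$ and $(k-nx)$ defines the polynomials $q_{i,j,r+1}(x)$ as explicit integer-coefficient combinations of the $q_{\cdot,\cdot,r}(x)$; since $p'(x)$, $(1+x)$ and $ax$ are themselves polynomials, the new coefficients remain polynomials in $x$ independent of $n$ and $k$. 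I expect the only delicate point to be precisely this bookkeeping of the two exponents and the denominator power under differentiation — there is no analytic obstacle once the first-order relation is in hand.
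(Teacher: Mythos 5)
Your proof is correct. The paper itself gives no details here --- it simply states that the lemma follows along the lines of Lemma 4 of \cite{RP} --- and your induction is precisely the standard argument behind that citation, adapted to the weights $W_{n,k}^a$: the base case $r=0$ is trivial, the logarithmic-derivative relation $p(x)\frac{d}{dx}W_{n,k}^a(x)=\big((k-nx)(1+x)-ax\big)W_{n,k}^a(x)$ drives the inductive step, and your bookkeeping is exactly right that the four contributions (the two from rewriting $\frac{d}{dx}W_{n,k}^a$, the one from $\frac{d}{dx}(k-nx)^j$, and the quotient-rule term) either preserve the weight $2i+j$ or raise it by exactly one, while the denominator power rises to $r+1$, so every index stays within $2i+j\le r+1$. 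The only slip is cosmetic: the resulting $q_{i,j,r+1}$ are combinations of the $q_{i,j,r}$ and $q_{i,j,r}'$ with \emph{polynomial} (not integer) coefficients such as $(1+x)$, $-ax$, $-jp(x)$ and $-rp'(x)$, and they may depend on the parameter $a$; this is all the lemma requires, since independence is claimed only of $n$ and $k$.
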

\begin{proof}
The proof of this lemma easily follows on proceeding along the lines of the
proof of Lemma 4 \cite{RP}. Hence the details are omitted.
\end{proof}

\begin{lemma}\label{lm2}
For the rth order $(r\in \mathbb{N}_0)$ moment of the operators (\ref{e1}),
 defined as $T_{n,r}^a(x):= K_n^a(t^r;x),$ we have
\begin{eqnarray*}
T_{n,r}^a(x)&=&\frac{1}{r+1}\sum_{j=0}^{r}{r+1\choose j}\frac{1}{(n+1)
^{r-j}}\upsilon_{n,j}^a(x),
\end{eqnarray*}
where $\upsilon_{n,j}^a(x)$ is the $j$th order moment of the operators
$B_{n,a}^*.$\\

Consequently,
$T_{n,0}^a(x)=1, \,\, T_{n,1}^a(x)= \frac{1}{n+1}\bigg(nx+\frac{ax}{1+x}
+\frac{1}{2}\bigg),$\\
$T_{n,2}^a(x)=\dfrac{1}{(n+1)^2}\bigg(n^2x^2+n\bigg(x^2+2x+\dfrac{2ax^2}
{1+x}\bigg)+\dfrac{a^2x^2}{(1+x)^2}+\dfrac{2ax}{1+x}+\dfrac{1}{3}\bigg),$\\
and for each $x\in(0,\infty)$ and $r\in \mathbb{N},  T_{n,r}^a(x)=x^r+n^{-1}
(p_r(x,a)+o(1)),$
where $p_r(x,a)$ is a rational function of $x$ depending on the parameters
$a$ and $r.$
\end{lemma}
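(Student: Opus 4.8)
The plan is to evaluate $K_n^a$ directly on the monomial $f(t)=t^r$ and then to recognise the resulting sums as the moments $\upsilon_{n,j}^a$ already obtained in Lemma \ref{lm4}. First I would substitute $f(t)=t^r$ into the defining formula (\ref{e1}) and carry out the inner integral exactly,
$$\int_{k/(n+1)}^{(k+1)/(n+1)} t^r\, dt = \frac{1}{(r+1)(n+1)^{r+1}}\left[(k+1)^{r+1}-k^{r+1}\right].$$
The key observation is that expanding $(k+1)^{r+1}$ by the binomial theorem cancels the leading term $k^{r+1}$, leaving $\sum_{j=0}^{r}{r+1\choose j}k^{j}$; thus the degree in $k$ drops from $r+1$ to $r$, which is precisely what allows the answer to be written using moments of order at most $r$.

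Next I would interchange the finite sum over $j$ with the series in $k$ --- legitimate because the inner binomial sum has finitely many terms and each series $\sum_k W_{n,k}^a(x)k^j$ converges to $(n+1)^j\upsilon_{n,j}^a(x)$ by the definition of the moments. Collecting the powers of $(n+1)$ then gives
$$T_{n,r}^a(x)=\frac{1}{r+1}\sum_{j=0}^{r}{r+1\choose j}\frac{1}{(n+1)^{r-j}}\upsilon_{n,j}^a(x),$$
which is the asserted identity. The explicit values for $r=0,1,2$ follow by substituting $\upsilon_{n,0}^a(x)=1$ and $\upsilon_{n,1}^a(x)$ from Lemma \ref{lm4}, together with $\upsilon_{n,2}^a(x)$ computed from the recurrence (\ref{eq3}), and simplifying.

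For the asymptotic claim I would isolate the dominant term $j=r$, which equals $\upsilon_{n,r}^a(x)=x^r+n^{-1}(q_r(x,a)+o(1))$ by (\ref{p1}); the term $j=r-1$ carries a factor $(n+1)^{-1}$ and contributes $\tfrac{r}{2}x^{r-1}$ at order $n^{-1}$, while every term with $j\le r-2$ is $O(n^{-2})$, being weighted by $(n+1)^{-(r-j)}$ with $r-j\ge 2$ against a bounded $\upsilon_{n,j}^a$. Summing these yields $T_{n,r}^a(x)=x^r+n^{-1}(p_r(x,a)+o(1))$ with $p_r(x,a)=q_r(x,a)+\tfrac{r}{2}x^{r-1}$, a rational function of $x$. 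There is no genuine obstacle here: once the binomial cancellation of the top power is noticed, the computation is elementary, and the only bookkeeping is passing from $(n+1)^{-1}$ to $n^{-1}$, which alters the coefficients only at order $o(n^{-1})$.
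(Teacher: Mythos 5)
Your proposal is correct and follows essentially the same route as the paper's proof: exact evaluation of the inner integral, binomial expansion of $(k+1)^{r+1}$ with cancellation of the top term, interchange of the sums over $j$ and $k$, and identification of $\sum_k W_{n,k}^a(x)k^j$ with $(n+1)^j\upsilon_{n,j}^a(x)$. Your handling of the asymptotic assertion (isolating the $j=r$ and $j=r-1$ terms to get $p_r(x,a)=q_r(x,a)+\tfrac{r}{2}x^{r-1}$) is in fact more explicit than the paper's, which simply states that this part follows from the identity together with Lemma \ref{lm4}.
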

\begin{proof}
From equation (\ref{e1}), we have
\begin{eqnarray}\label{p9}
T_{n,r}^a(x)&=&(n+1)\sum_{k=0}^{\infty}W_{n,k}^{a}(x)\int_{\frac{k}{n+1}}
^{\frac{k+1}{n+1}}t^r dt\nonumber\\
&=&\frac{n+1}{r+1}\sum_{k=0}^{\infty}W_{n,k}^a(x)\bigg\{\bigg(\frac{k+1}{n+1}\bigg)
^{r+1}-\bigg(\frac{k}{n+1}\bigg)^{r+1}\bigg\}\nonumber\\
&=& \frac{n+1}{r+1}\sum_{k=0}^{\infty}W_{n,k}^a(x)\bigg\{\sum_{j=0}^{r+1}{r+1\choose j}\bigg(\frac{k}{n+1}\bigg)^{j}\bigg(\frac{1}{n+1}\bigg)^{r+1-j}-\bigg(\frac{k}{n+1}
\bigg)^{r+1}\bigg\}\nonumber\\
&=& \frac{n+1}{r+1}\sum_{k=0}^{\infty}W_{n,k}^a(x)\bigg\{\sum_{j=0}^{r}{r+1\choose j}\bigg(\frac{k}{n+1}\bigg)^{j}\bigg(\frac{1}{n+1}\bigg)^{r+1-j}\bigg\}\nonumber\\
&=& \frac{1}{r+1}\sum_{j=0}^{r}{r+1\choose j}\frac{1}{(n+1)^{r-j}}\sum_{k=0}^{\infty}
W_{n,k}^{a}(x)\bigg(\frac{k}{n+1}\bigg)^{j}\nonumber\\
&=& \frac{1}{r+1}\sum_{j=0}^{r}{r+1\choose j}\frac{1}{(n+1)^{r-j}}\upsilon_{n,j}^a(x)
\end{eqnarray}
from which the values of $T_{n,r}^a(x), r=0,1,2$ can be found easily. The last
assertion follows from equation (\ref{p9}) by using Lemma \ref{lm4},
the required result is immediate.
\end{proof}

\begin{lemma}\label{lm1}
For the $r$th order central moment of $K_n^{a},$ defined as
\begin{eqnarray*}
u_{n,r}^a(x):= K_n^a((t-x)^r;x),
\end{eqnarray*}
we have
\begin{enumerate}[(i)]
\item  $u_{n,0}^a(x)=1, u_{n,1}^a(x)=\frac{1}{n+1}\bigg(-x+\frac{ax}{1+x}+
\frac{1}{2}\bigg)$ \\and $u_{n,2}^a(x)=\dfrac{1}{(n+1)^2}\bigg\{nx(x+1)-x(1-x)
+\dfrac{ax}{1+x}\bigg(\dfrac{ax}{1+x}+2(1-x)\bigg)+\dfrac{1}{3}\bigg\};$\\ \item
$u_{n,r}^a(x)$ is a rational function of $x$ depending on parameters a and r;\\ \item
for each $x\in (0,\infty), u_{n,r}^a(x)=O\bigg(\frac{1}{n^{[\frac{r+1}{2}]}}\bigg).$
\end{enumerate}
\end{lemma}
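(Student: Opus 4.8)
The plan is to reduce everything to the central moments $\mu_{n,s}^a(x)$ of the generalized Baskakov operators $B_{n,a}^*$, whose order of decay is already controlled by Lemma \ref{l1}. Starting from the definition and (\ref{e1}), I would split the deviation inside each integral as $t-x=\big(t-\frac{k}{n+1}\big)+\big(\frac{k}{n+1}-x\big)$, expand by the binomial theorem, and integrate term by term. Since $\int_{k/(n+1)}^{(k+1)/(n+1)}\big(t-\frac{k}{n+1}\big)^{i}\,dt=\frac{1}{(i+1)(n+1)^{i+1}}$, the factor $(n+1)$ in front cancels one power and this yields the representation
\begin{eqnarray*}
u_{n,r}^a(x)=\sum_{i=0}^{r}{r\choose i}\frac{1}{(i+1)(n+1)^{i}}\,\mu_{n,r-i}^a(x),
\end{eqnarray*}
which is the workhorse for all three assertions.

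For part (i), I would either specialise this identity to $r=0,1,2$ or, equivalently, expand $(t-x)^r=\sum_{j=0}^{r}{r\choose j}(-x)^{r-j}t^{j}$ and substitute the explicit values of $T_{n,0}^a, T_{n,1}^a, T_{n,2}^a$ from Lemma \ref{lm2}; both routes give the stated closed forms after collecting terms over the common denominator involving $1+x$. Part (ii) is then immediate from the displayed representation: by Lemma \ref{l1}(i) each $\mu_{n,r-i}^a(x)$ is a rational function of $x$ (with parameters $a$ and $r-i$), and a finite linear combination of rational functions with coefficients polynomial in $x$ and in $(n+1)^{-1}$ is again rational in $x$.

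Part (iii) is where the actual estimation happens, and it is the step I expect to require the most care. Applying Lemma \ref{l1}(ii) to each summand shows that the $i$th term is $O\big(n^{-(i+[(r-i+1)/2])}\big)$, so the whole sum is $O\big(n^{-m_r}\big)$ with $m_r=\min_{0\le i\le r}\big(i+[(r-i+1)/2]\big)$. The crux is therefore the arithmetic fact that this minimum equals $[(r+1)/2]$ and is attained at $i=0$. I would establish it by showing the map $i\mapsto i+[(r-i+1)/2]$ is nondecreasing: its successive difference is $1-\big([(r-i+1)/2]-[(r-i)/2]\big)\in\{0,1\}\ge 0$, because consecutive half-integer floors differ by $0$ or $1$. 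Hence the smallest exponent is the value at $i=0$, namely $[(r+1)/2]$, and $u_{n,r}^a(x)=O\big(n^{-[(r+1)/2]}\big)$ follows. The only genuine subtlety is bookkeeping the floor functions; there is no analytic obstacle, since the Baskakov central moments have already done the heavy lifting in Lemma \ref{l1}.
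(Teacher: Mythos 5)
Your proposal is correct and takes essentially the same route as the paper: the paper integrates $(t-x)^r$ over each cell and expands, arriving at $u_{n,r}^a(x)=\frac{1}{r+1}\sum_{\nu=1}^{r+1}\binom{r+1}{\nu}(n+1)^{-(\nu-1)}\mu_{n,r+1-\nu}^a(x)$, which is literally your identity after the substitution $\nu=i+1$ and the observation $\frac{1}{r+1}\binom{r+1}{i+1}=\frac{1}{i+1}\binom{r}{i}$, and it then likewise concludes parts (ii) and (iii) by invoking Lemma~\ref{l1}. If anything, your explicit verification that $\min_{0\le i\le r}\bigl(i+[(r-i+1)/2]\bigr)=[(r+1)/2]$ is more careful than the paper's one-line estimate, whose printed exponent $[(r+1-\nu)/2]$ is evidently a typo for $[(r+2-\nu)/2]$ (as printed it would only yield $O(n^{-[r/2]})$ when $r$ is odd).
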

\begin{proof}
Using equation (\ref{e1}), assertion $(i)$ follows by a simple computation.
To prove the assertions $(ii)$ and $(iii),$ we may write
\begin{eqnarray*}
u_{n,r}^a(x)&=&(n+1)\sum_{k=0}^{\infty}W_{n,k}^{a}(x)\int_{\frac{k}{n+1}}^
{\frac{k+1}{n+1}}(t-x)^rdt\\
&=&\frac{n+1}{r+1}\sum_{k=0}^{\infty}W_{n,k}^a(x)\bigg\{\bigg(\frac{k+1}{n+1}-
x\bigg)^{r+1}-\bigg(\frac{k}{n+1}-x\bigg)^{r+1}\bigg\}\\
&=&\frac{n+1}{r+1}\sum_{k=0}^{\infty}W_{n,k}^a(x)\bigg\{\sum_{\nu=0}^{r+1}{r+1
\choose \nu}\bigg(\frac{k}{n+1}-x\bigg)^{r+1-\nu}\bigg(\frac{1}{n+1}\bigg)^{\nu}
-\bigg(\frac{k}{n+1}-x\bigg)^{r+1}\bigg\}\\&=&\frac{1}{r+1}\sum_{\nu=1}
^{r+1}{r+1\choose \nu}\frac{1}{(n+1)^{\nu-1}}\sum_{k=0}^{\infty}W_{n,k}^a(x)
\bigg(\frac{k}{n+1}-x\bigg)^{r+1-\nu}\\&=&\frac{1}{r+1}\sum_{\nu=1}^{r+1}
{r+1\choose \nu}\frac{1}{(n+1)^{\nu-1}}\mu_{n,r+1-\nu}^a(x),
\end{eqnarray*}
from which assertion $(ii)$ follows in view of Lemma \ref{l1}. Also, we get
\begin{eqnarray*}
|u_{n,r}^a(x)| &\leq& C\sum_{\nu=1}^{r+1}\frac{1}{n^{\nu-1}}\frac{1}
{n^{[\frac{r+1-\nu}{2}]}}\leq C \frac{1}{n^{[\frac{r+1}{2}]}}.
\end{eqnarray*}
This completes the proof.
\end{proof}

Let $C_B[0,\infty)$ denote the space of all real valued bounded and
continuous functions on $[0,\infty)$ endowed with the norm
\begin{eqnarray*}
\parallel f\parallel=\sup\{|f(x)|:x\in[0,\infty)\}.
\end{eqnarray*}
For $\delta>0,$ the K-functional is defined by
\begin{eqnarray*}
K_2(f,\delta)=\inf_{g\in W^2}\{\parallel f-g\parallel+\delta\parallel g''\parallel\},
 \end{eqnarray*}
where $W^{2}=\{g \in C_{B}[0,\infty): g', g^{''} \in C_{B}[0,\infty)\},$
by \cite{RAD} there exists an absolute constant $C>0$ such that
\begin{eqnarray}\label{m1}
K_{2}(f,\delta) \leq C\omega_{2}(f,\sqrt\delta),
\end{eqnarray}
where
\begin{eqnarray*}
\omega_{2}(f,\sqrt\delta)  &=& \sup_{0<h\leq\sqrt\delta} \sup_{x \in [0,\infty)}
\mid f(x+2h)-2f(x+h)+f(x) \mid
\end{eqnarray*}
is the second order modulus of smoothness of $f$. By
\begin{eqnarray*}
\omega(f,\delta) &=&  \sup_{0<h\leq\sqrt\delta} \sup_{x \in [0,\infty)}
\mid f(x+h)-f(x) \mid ,
\end{eqnarray*}
we denote the usual modulus of continuity of $f\in C_{B}[0,\infty)$.

Now, for $f\in C_B[0,\infty), x\geq 0$ the auxiliary operators are defined as
\begin{eqnarray*}
\widetilde{K}_n^a(f;x)=K_n^a(f;x)-f\bigg(\frac{1}{n+1}\bigg(nx+\frac{ax}
{1+x}+\frac{1}{2}\bigg)\bigg)+f(x).
\end{eqnarray*}

\begin{lemma}\label{l2}
Let $f\in W^2.$ Then for all $x\geq 0,$ we have
\begin{eqnarray*}
|\widetilde{K}_n^a(f;x)-f(x)|\leq \frac{1}{2}\gamma_n^a(x)\parallel f''\parallel,
\end{eqnarray*}
where
\begin{eqnarray*}
\gamma_n^a(x) &=& K_n^a((t-x)^2;x)+\frac{1}{(n+1)^2}
\bigg(-x+\frac{ax}{1+x}+\frac{1}{2}\bigg)^2\\ &=& \dfrac{1}{(n+1)^2}
\bigg\{(n+2)x^2+(n-2)x+\dfrac{2a^2x^2}{(1+x)^2}-\dfrac{4ax^2}{1+x}
+\dfrac{3ax}{1+x}+\dfrac{7}{12}\bigg\}.
\end{eqnarray*}
\end{lemma}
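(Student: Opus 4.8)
The plan is to exploit the fact that the auxiliary operator $\widetilde{K}_n^a$ reproduces linear functions. First I would verify, using the values $T_{n,0}^a(x)=1$ and $T_{n,1}^a(x)=\frac{1}{n+1}\big(nx+\frac{ax}{1+x}+\frac12\big)$ from Lemma \ref{lm2}, that $\widetilde{K}_n^a(1;x)=1$ and $\widetilde{K}_n^a(t;x)=x$. The key observation is that the point $\bar x:=\frac{1}{n+1}\big(nx+\frac{ax}{1+x}+\frac12\big)$ at which $f$ is evaluated in the definition of $\widetilde{K}_n^a$ equals exactly $T_{n,1}^a(x)$, so the first moment of $K_n^a$ is cancelled precisely, giving $\widetilde{K}_n^a((t-x);x)=0$.

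Next, for $f\in W^2$ I would write Taylor's formula with integral remainder,
\begin{eqnarray*}
f(t)=f(x)+(t-x)f'(x)+\int_x^t(t-u)f''(u)\,du,
\end{eqnarray*}
and apply $\widetilde{K}_n^a$ in the variable $t$. Since the operator reproduces the constant term $f(x)$ and annihilates the linear term $(t-x)f'(x)$, only the remainder survives, so that
\begin{eqnarray*}
\widetilde{K}_n^a(f;x)-f(x)=\widetilde{K}_n^a\bigg(\int_x^t(t-u)f''(u)\,du;x\bigg).
\end{eqnarray*}
Writing out the definition of $\widetilde{K}_n^a$ and using that the remainder vanishes at $t=x$, the right-hand side equals $K_n^a\big(\int_x^t(t-u)f''(u)\,du;x\big)-\int_x^{\bar x}(\bar x-u)f''(u)\,du$.

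Now I would estimate the two pieces separately. For every $t$ one has $\big|\int_x^t(t-u)f''(u)\,du\big|\le\frac12\|f''\|(t-x)^2$, whence the first piece is bounded by $\frac12\|f''\|\,K_n^a((t-x)^2;x)$; the same elementary bound applied at $\bar x$ shows the second piece is at most $\frac12\|f''\|\,(\bar x-x)^2$. Since $\bar x-x=\frac{1}{n+1}\big(-x+\frac{ax}{1+x}+\frac12\big)$, adding the two estimates produces exactly $\frac12\gamma_n^a(x)\|f''\|$ with $\gamma_n^a(x)$ in its first displayed form.

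Finally, the closed-form expression for $\gamma_n^a(x)$ follows by substituting $K_n^a((t-x)^2;x)=u_{n,2}^a(x)$ from Lemma \ref{lm1}(i), expanding $\big(-x+\frac{ax}{1+x}+\frac12\big)^2$, and collecting the coefficients of $x^2$, $x$, $\frac{a^2x^2}{(1+x)^2}$, $\frac{ax^2}{1+x}$, $\frac{ax}{1+x}$ and the constant term. I expect no genuine obstacle beyond careful bookkeeping in this last algebraic step; the only delicate conceptual point is the correct handling of the subtracted point-evaluation in $\widetilde{K}_n^a$, namely recognizing that it contributes precisely the $(\bar x-x)^2$ correction which, together with the raw second moment $K_n^a((t-x)^2;x)$, assembles $\gamma_n^a(x)$.
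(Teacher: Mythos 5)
Your proposal is correct and follows essentially the same route as the paper: Taylor's formula with integral remainder, the observation that $\widetilde{K}_n^a$ annihilates $(t-x)$, and the resulting split of the error into $K_n^a\big(\int_x^t(t-u)f''(u)\,du;x\big)$ minus the point-evaluation term, each bounded by $\tfrac12\|f''\|$ times $K_n^a((t-x)^2;x)$ and $(\bar x-x)^2$ respectively. The only difference is cosmetic: you spell out the verification that $\widetilde{K}_n^a$ reproduces linear functions and the final algebraic simplification of $\gamma_n^a(x)$, both of which the paper leaves implicit.
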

\begin{proof}
It is clear from the definition of $\widetilde{K}_n^a$ that\\
$$\widetilde{K}_n^a(t-x;x)=0.$$\\
Let $f\in W^2.$ From the Taylor expansion of $f$, we have
\begin{eqnarray*}
f(t)-f(x)=(t-x)f'(x)+\int_{x}^t(t-u)f''(u)du.
\end{eqnarray*}
Hence
\begin{eqnarray*}
\widetilde{K}_n^a(f;x)-f(x)&=&f'(x)\widetilde{K}_n^a(t-x;x)+\widetilde
{K}_n^a\bigg(\int_x^t(t-u)f''(u)du;x\bigg)=\widetilde{K}_n^a\bigg
(\int_x^t(t-u)f''(u)du;x\bigg)\\&=&K_n^a\bigg(\int_x^t(t-u)f''(u)du;x\bigg)
-\int_x^{\frac{1}{n+1}(nx+\frac{ax}{1+x}+\frac{1}{2})}
\bigg(\frac{1}{n+1}\bigg(nx+\frac{ax}{1+x}+\frac{1}{2}\bigg)-u \bigg)f''(u)du
\end{eqnarray*}
and thus
\begin{eqnarray}\label{e2}
|\widetilde{K}_n^a(f;x)-f(x)|\leq K_n^a\bigg(\bigg|\int_x^t(t-u)f''(u)du
\bigg|;x\bigg)+\bigg|\int_x^{\frac{1}{n+1}(nx+\frac{ax}{1+x}+\frac{1}{2})}
\bigg(\frac{1}{n+1}\bigg(nx+\frac{ax}{1+x}+\frac{1}{2}\bigg)-u \bigg
)f''(u)du\bigg|.
\end{eqnarray}
Since
\begin{eqnarray*}
\bigg|\int_x^t(t-u)f''(u)du\bigg|\leq\frac{(t-x)^2}{2}\parallel f''\parallel
\end{eqnarray*}
and
\begin{eqnarray*}
\bigg|\int_x^{\frac{1}{n+1}(nx+\frac{ax}{1+x}+\frac{1}{2})}
\bigg(\frac{1}{n+1}\bigg(nx+\frac{ax}{1+x}+\frac{1}{2}\bigg)-u \bigg)
f''(u)du\bigg|\leq\frac{1}{2(n+1)^2}\bigg(-x+\frac{ax}{1+x}
+\frac{1}{2}\bigg)^2\parallel f''\parallel,
\end{eqnarray*}
it follows from (\ref{e2}) that
\begin{eqnarray*}
|\widetilde{K}_n^a(f;x)-f(x)|&\leq&\frac{1}{2}\bigg\{K_n^a((t-x)^2;x)+\frac{1}{(n+1)^2}
\bigg(-x+\frac{ax}{1+x}+\frac{1}{2}\bigg)^2\bigg\}\parallel f''\parallel\\
&=&\frac{1}{2}\gamma_n^a(x)\parallel f''\parallel.
\end{eqnarray*}
This completes the proof of the lemma.
\end{proof}

\section{Main Results}
\subsection{Local approximation}
\begin{thm}\label{t1}
Let $f$ be a real valued bounded and uniform continuous function on $[0,\infty).$
Then for all $x\geq 0,$ there exists a constant $C>0$ such that
\begin{eqnarray*}
|K_n^a(f;x)-f(x)|\leq C\omega_2\bigg(f;\sqrt{\gamma_n^a(x)}\bigg)
+\omega\bigg(f;\frac{1}{n+1}\bigg|-x+\frac{ax}{1+x}+\frac{1}{2}\bigg|\bigg),
\end{eqnarray*}
where $\gamma_n^a(x)$ is as defined in Lemma \ref{l2}.
\end{thm}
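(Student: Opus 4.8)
The plan is to run the standard K-functional argument built on the auxiliary operator $\widetilde{K}_n^a$, Lemma \ref{l2}, and the estimate (\ref{m1}). The two facts I would isolate at the outset are: first, since $T_{n,0}^a(x)=1$ exhibits $K_n^a$ as a positive operator reproducing constants, one has $|K_n^a(f;x)|\le\|f\|$; and second, reading off the definition of $\widetilde{K}_n^a$, this yields the crude bound $\|\widetilde{K}_n^a f\|\le 3\|f\|$ for every $f\in C_B[0,\infty)$.

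Writing $\xi_n^a(x)=\frac{1}{n+1}\big(nx+\frac{ax}{1+x}+\frac12\big)$, I would split the error exactly along the shift encoded in $\widetilde{K}_n^a$, namely $K_n^a(f;x)-f(x)=\big(\widetilde{K}_n^a(f;x)-f(x)\big)+\big(f(\xi_n^a(x))-f(x)\big)$. The second bracket is handled at once: since $\xi_n^a(x)-x=\frac{1}{n+1}\big(-x+\frac{ax}{1+x}+\frac12\big)$, the estimate $|f(\xi_n^a(x))-f(x)|\le\omega\big(f;\frac{1}{n+1}\big|-x+\frac{ax}{1+x}+\frac12\big|\big)$ reproduces precisely the second summand in the claimed inequality.

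For the first bracket I would insert an arbitrary $g\in W^2$ and decompose $\widetilde{K}_n^a(f;x)-f(x)=\big(\widetilde{K}_n^a(f-g;x)-(f-g)(x)\big)+\big(\widetilde{K}_n^a(g;x)-g(x)\big)$. The crude bound controls the first piece by $4\|f-g\|$, while Lemma \ref{l2} controls the second by $\frac12\gamma_n^a(x)\|g''\|$. Factoring out $4$ gives $4\big(\|f-g\|+\tfrac18\gamma_n^a(x)\|g''\|\big)$, and taking the infimum over $g\in W^2$ produces $4\,K_2\big(f,\tfrac18\gamma_n^a(x)\big)$. Applying (\ref{m1}) and using that $\omega_2$ is nondecreasing in its second argument (so that the factor $\tfrac18<1$ may be dropped) converts this into $C\,\omega_2\big(f;\sqrt{\gamma_n^a(x)}\big)$; adding the two brackets yields the theorem.

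I do not expect a substantive obstacle, since every ingredient is already in place from the preceding lemmas. The only point that demands care is the constant bookkeeping: correctly carrying the factor $\tfrac12$ from Lemma \ref{l2} through the definition of the K-functional (which forces the argument $\tfrac18\gamma_n^a(x)$) and then absorbing $\tfrac18<1$ by monotonicity, so that the modulus appearing in the final estimate is $\sqrt{\gamma_n^a(x)}$ and not a rescaling of it.
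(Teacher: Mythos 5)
Your proposal is correct and takes essentially the same route as the paper: both split off the shift $f(\xi_n^a(x))-f(x)$ handled by $\omega$, insert $g\in W^2$, use the crude bound $\|\widetilde{K}_n^a h\|\le 3\|h\|$ together with Lemma \ref{l2}, and finish by taking the infimum and applying (\ref{m1}). The only (harmless) difference is bookkeeping: you carry the factor $\tfrac12$ from Lemma \ref{l2} through to a K-functional argument of $\tfrac18\gamma_n^a(x)$ and then absorb it by monotonicity of $\omega_2$, whereas the paper simply drops the $\tfrac12$ before taking the infimum.
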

\begin{proof}
For $f\in C_B[0,\infty)$ and $g\in W^2,$ by the definition of the
operators $\widetilde{K}_n^a,$ we obtain
\begin{eqnarray*}
|K_n^a(f;x)-f(x)|\leq |\widetilde{K}_n^a(f-g;x)|+|(f-g)(x)|+|\widetilde{K}_n^a(g;x)-g(x)|
+\bigg|f\bigg(\frac{1}{n+1}\bigg(nx+\frac{ax}{1+x}+\frac{1}{2}\bigg)\bigg)-f(x)\bigg|
\end{eqnarray*}
and
\begin{eqnarray*}
|\widetilde{K}_n^a(f;x)|\leq \parallel f\parallel K_n^a(1;x)+2\parallel
f\parallel=3\parallel f\parallel.
\end{eqnarray*}
Therefore, we have
\begin{eqnarray*}
|K_n^a(f;x)-f(x)|\leq 4\parallel f-g\parallel+|\widetilde{K}_n^a(g;x)-g(x)|
+\omega\bigg(f;\frac{1}{n+1}\bigg|-x+\frac{ax}{1+x}+\frac{1}{2}\bigg|\bigg).
\end{eqnarray*}
Now, using Lemma \ref{l2}, the above inequality reduces to
\begin{eqnarray*}
|K_n^a(f;x)-f(x)|\leq 4\parallel f-g\parallel+\gamma_n^a(x)\parallel g''\parallel
+\omega\bigg(f;\frac{1}{n+1}\bigg|-x+\frac{ax}{1+x}+\frac{1}{2}\bigg|\bigg).
\end{eqnarray*}
Thus, taking infimum over all $g\in W^2$ on the right-hand side of
the last inequality and using (\ref{m1}),\\we get the required result.
\end{proof}

Let us now consider the Lipschitz-type space in two parameters \cite{OA}:
\begin{eqnarray*}
Lip_M^{(a_1,a_2)}(\alpha):=\bigg\{f\in C_B[0,\infty):|f(t)-f(x)|\leq M\frac{|t-x|^{\alpha}}
{(t+a_1x^2+a_2x)^{\frac{\alpha}{2}}};x,t\in (0,\infty)\bigg\}\, \mbox{for}\,\, a_1, a_2>0,
\end{eqnarray*}
where $M$ is a positive constant and $\alpha\in (0,1].$

\begin{thm}\label{t2}
Let $f\in Lip_M^{(a_1,a_2)}(\alpha).$ Then, for all $x>0,$ we have
\begin{eqnarray*}
|K_n^a(f;x)-f(x)|\leq M\bigg(\frac{u_{n,2}^a(x)}{(a_1x^2+a_2x)}\bigg)^\frac{\alpha}{2}.
\end{eqnarray*}
\end{thm}
\begin{proof}
First we prove the theorem for the case $\alpha=1.$ We may write
\begin{eqnarray*}
|K_n^a(f;x)-f(x)|&\leq& (n+1)\sum_{k=0}^{\infty}W_{n,k}^{a}(x)
\int_{\frac{k}{n+1}}^{\frac{k+1}{n+1}}|f(t)-f(x)|dt\\
&\leq& M(n+1)\sum_{k=0}^{\infty}W_{n,k}^{a}(x)\int_{\frac{k}{n+1}}^
{\frac{k+1}{n+1}}\frac{|t-x|}{\sqrt{t+a_1x^2+a_2x}}dt.
\end{eqnarray*}
Using the fact that$\frac{1}{\sqrt{t+a_1x^2+a_2x}}<\frac{1}{\sqrt{a_1x^2+a_2x}}$
and the Cauchy-Schwarz inequality, the above inequality implies that
\begin{eqnarray*}
|K_n^a(f;x)-f(x)|\leq \frac{M(n+1)}{\sqrt{a_1x^2+a_2x}}\sum_{k=0}^{\infty}W_{n,k}^{a}(x)
\int_{\frac{k}{n+1}}^{\frac{k+1}{n+1}}|t-x|dt
=\frac{M}{\sqrt{a_1x^2+a_2x}}K_n^a(|t-x|;x)\leq M\bigg(\sqrt{\frac{u_{n,2}^a(x)}{a_1x^2+a_2x}}\bigg).
\end{eqnarray*}
Thus the result hold for $\alpha=1.$ Now, let $0<\alpha<1,$ then applying the H$\ddot
{o}$lder inequality with $p=\frac{1}{\alpha}$ and $q=\frac{1}{1-\alpha},$ we have
\begin{eqnarray*}
|K_n^a(f;x)-f(x)|&\leq& (n+1)\sum_{k=0}^{\infty}W_{n,k}^{a}(x)\int_{\frac{k}{n+1}}
^{\frac{k+1}{n+1}}|f(t)-f(x)|dt\\&\leq&\bigg\{\sum_{k=0}^{\infty}W_{n,k}^{a}(x)
\bigg((n+1)\int_{\frac{k}{n+1}}^{\frac{k+1}{n+1}}|f(t)-f(x)|dt\bigg)
^{\frac{1}{\alpha}}\bigg\}^{\alpha}\\&\leq& \bigg\{\sum_{k=0}^{\infty}
W_{n,k}^{a}(x)(n+1)\int_{\frac{k}{n+1}}^{\frac{k+1}{n+1}}|f(t)-f(x)|
^{\frac{1}{\alpha}}dt\bigg\}^{\alpha}\\&\leq& M\bigg\{\sum_{k=0}^{\infty}
W_{n,k}^{a}(x)(n+1)\int_{\frac{k}{n+1}}^{\frac{k+1}{n+1}}\frac{|t-x|}
{\sqrt{t+a_1x^2+a_2x}}dt\bigg\}^{\alpha}\\&\leq& \frac{M}{(a_1x^2+a_2x)^{\frac{\alpha}{2}}}
\bigg\{\sum_{k=0}^{\infty}W_{n,k}^{a}(x)(n+1)\int_{\frac{k}{n+1}}
^{\frac{k+1}{n+1}}|t-x|dt\bigg\}^{\alpha}\\&\leq& \frac{M}{(a_1x^2+a_2x)^{\frac{\alpha}{2}}}
(K_n^a(|t-x|;x))^{\alpha}\leq M\bigg(\frac{u_{n,2}^a(x)}{(a_1x^2+a_2x)}\bigg)^\frac{\alpha}{2}.
\end{eqnarray*}
Thus, the proof is completed.
\end{proof}

Next, we obtain the local direct estimate of the operators defined in (\ref{e1})
using the Lipschitz-type maximal function of order $\tau $ introduced
by B. Lenze \cite{BLN} as
\begin{equation}\label{p10}
\widetilde{\omega }_{\tau}(f,x)=\sup_{t\neq x,\,\ t\in \lbrack 0,\infty )}%
\frac{|f(t)-f(x)|}{|t-x|^{\tau }},\,\,\ x\in \lbrack 0,\infty )\,\,\ %
\mbox{and}\,\,\ \tau \in (0,1].
\end{equation}
\begin{thm}\label{t4}
 Let $f\in C_{B}[0,\infty )$ and $0<\tau \leq 1.$ Then, for all $%
x\in \lbrack 0,\infty )$ we have
\begin{equation*}
|K_n^a(f;x)-f(x)|\leq \widetilde{\omega }_{\tau
}(f,x)(u_{n,2}^a(x))^{\frac{\tau}{2}}.
\end{equation*}
\end{thm}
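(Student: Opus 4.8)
The plan is to proceed exactly as in the proof of Theorem \ref{t2}, reducing everything to a single central-moment estimate that has already been recorded in Lemma \ref{lm1}. First I would start from the positivity and linearity of $K_n^a$ together with the normalization $K_n^a(1;x)=1$ (from Lemma \ref{lm2}), writing
\begin{eqnarray*}
|K_n^a(f;x)-f(x)|\leq (n+1)\sum_{k=0}^{\infty}W_{n,k}^{a}(x)\int_{\frac{k}{n+1}}^{\frac{k+1}{n+1}}|f(t)-f(x)|\,dt.
\end{eqnarray*}
The next step is to invoke the definition (\ref{p10}) of the Lipschitz-type maximal function: for every $t\neq x$ one has the pointwise inequality $|f(t)-f(x)|\leq \widetilde{\omega}_{\tau}(f,x)\,|t-x|^{\tau}$, which also holds trivially at $t=x$. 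Substituting this bound inside the integral and pulling the constant $\widetilde{\omega}_{\tau}(f,x)$ out yields
\begin{eqnarray*}
|K_n^a(f;x)-f(x)|\leq \widetilde{\omega}_{\tau}(f,x)\,K_n^a\big(|t-x|^{\tau};x\big).
\end{eqnarray*}

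It remains only to estimate $K_n^a(|t-x|^{\tau};x)$, and here the key step is an application of the Hölder inequality for the positive linear functional $K_n^a(\cdot\,;x)$ with conjugate exponents $p=\tfrac{2}{\tau}$ and $q=\tfrac{2}{2-\tau}$. Since $|t-x|^{\tau}=|t-x|^{\tau}\cdot 1$, this gives
\begin{eqnarray*}
K_n^a\big(|t-x|^{\tau};x\big)\leq \Big(K_n^a\big((t-x)^{2};x\big)\Big)^{\frac{\tau}{2}}\Big(K_n^a(1;x)\Big)^{\frac{2-\tau}{2}}=\big(u_{n,2}^a(x)\big)^{\frac{\tau}{2}},
\end{eqnarray*}
using $K_n^a(1;x)=1$ and the definition $u_{n,2}^a(x)=K_n^a((t-x)^2;x)$ from Lemma \ref{lm1}. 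Combining the two displayed estimates delivers the claimed inequality. For the boundary case $\tau=1$ one could equally well invoke the Cauchy--Schwarz inequality directly, exactly as was done in the $\alpha=1$ part of Theorem \ref{t2}.

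There is no real obstacle in this argument: the only point requiring a little care is the choice of the Hölder exponents $p=2/\tau$ and $q=2/(2-\tau)$, which are dictated by the desire to convert the $\tau$-th absolute moment into the second central moment $u_{n,2}^a(x)$ whose order $O\big(n^{-[(r+1)/2]}\big)$ is already controlled by Lemma \ref{lm1}. Note that $u_{n,2}^a(x)$ is precisely the quantity appearing in Lemma \ref{lm1}(i), so no additional moment computation is needed beyond what has been established.
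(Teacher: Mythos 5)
Your proposal is correct and follows essentially the same route as the paper: bound $|K_n^a(f;x)-f(x)|$ by $\widetilde{\omega}_{\tau}(f,x)\,K_n^a(|t-x|^{\tau};x)$ via the definition of the Lipschitz-type maximal function, then apply H\"older's inequality with exponents $p=2/\tau$, $q=2/(2-\tau)$ together with $K_n^a(1;x)=1$ to reduce to the second central moment $u_{n,2}^a(x)$. The extra details you supply (the explicit integral representation and the normalization step) are implicit in the paper's shorter write-up, so there is nothing to add or correct.
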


\begin{proof}
From the equation (\ref{p10}), we have
\begin{equation*}
|K_n^a(f;x)-f(x)|\leq \widetilde{\omega }_{\tau
}(f,x)K_n^a(|t-x|^{\tau };x).
\end{equation*}
Applying the H\"{o}lder's inequality with $p=\dfrac{2}{\tau }$ and $\dfrac{1%
}{q}=1-\dfrac{1}{p},$ we get
\begin{equation*}
|K_n^a(f;x)-f(x)|\leq \widetilde{\omega }_{\tau
}(f,x)(K_n^a(t-x)^{2};x)^{\frac{\tau }{2}}=
\widetilde{\omega }_{\tau }(f,x)(u_{n,2}^a(x))^{\frac{\tau}{2}}.
\end{equation*}
Thus, the proof is completed.
\end{proof}

\subsection{Weighted approximation}
(\cite{AAA}, \cite{VGA})  Let $H_{2}[0,\infty)$ be space of all functions $f$ defined on $[0,\infty)$
with the property that $|f(x)|\leq M_f(1+x^2),$ where $M_f$ is a constant depending
only on $f$.\\ By $C_{2}[0,\infty),$ we denote the subspace of all continuous
functions belonging to $H_{2}[0,\infty).$ If $f\in C_{2}[0,\infty)$ and
$\displaystyle\lim_{x\rightarrow \infty}|f(x)|(1+x^2)^{-1}$ exists, we write
$f\in C_{\rho}[0,\infty).$ The norm on $f\in C_{\rho}[0,\infty)$ is given by
$$\parallel f\parallel_{\rho}:=\displaystyle\sup_{x\in[0,\infty)}\frac{|f(x)|}{1+x^2}.$$
In what follows, we consider $\rho(x)=1+x^2.$\\
On the closed interval $[0,b],$ for any $b>0,$ we define the usual modulus of
continuity of $f$ by
$$\omega_b(f;\delta)=\displaystyle\sup_{|t-x|\leq \delta}\sup_{x,t\in[0,b]}|f(t)-f(x)|.$$

\begin{thm}\label{t3}
Let $f\in C_{2}[0,\infty)$ and $\omega_{b+1}(f;\delta)$ be its modulus of continuity on
the finite interval\\
$[0,b+1]\subset[0,\infty)$ with $b>0.$ Then for every $x\in [0,b]$ and $n\in \mathbb{N},$ we have
\begin{eqnarray*}
|K_n^a(f;x)-f(x)|\leq 4M_f(1+b^2)u_{n,2}^a(x)+2\omega_{b+1}
\bigg(f;\sqrt{u_{n,2}^a(x)}\bigg).
\end{eqnarray*}
\end{thm}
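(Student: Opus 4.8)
The plan is to estimate $|f(t)-f(x)|$ separately according to whether $t$ lies in the finite interval $[0,b+1]$ or outside it, and then apply the positive linear operator $K_n^a$ together with the moment bound from Lemma \ref{lm1}. First I would treat the case $t\in[0,b+1]$: since $x\in[0,b]\subset[0,b+1]$ as well, the definition of $\omega_{b+1}$ and its standard subadditivity property give, for any $\delta>0$,
$$|f(t)-f(x)|\leq\omega_{b+1}(f;|t-x|)\leq\left(1+\frac{|t-x|}{\delta}\right)\omega_{b+1}(f;\delta).$$

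Next I would handle the case $t>b+1$. Here $x\leq b$ forces $t-x>1$, so using $f\in C_2[0,\infty)$ one gets $|f(t)-f(x)|\leq M_f(2+t^2+x^2)$. The key step is to convert this into a multiple of $(t-x)^2$: since $t-x>1$ and $x\leq b$ one checks $t\leq(1+b)(t-x)$, whence $t^2\leq(1+b)^2(t-x)^2$, while $x^2\leq b^2\leq b^2(t-x)^2$ and $2\leq2(t-x)^2$. Collecting the terms gives the factor $2+(1+b)^2+b^2=3+2b+2b^2$, and the elementary fact $2b^2-2b+1\geq0$ lets me replace it by $4(1+b^2)$, yielding the clean bound
$$|f(t)-f(x)|\leq 4M_f(1+b^2)(t-x)^2,\qquad t>b+1,\ x\in[0,b].$$
This growth estimate is the main technical obstacle, since the constant $4(1+b^2)$ must be produced exactly to match the statement.

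Combining the two cases, and noting that in each regime the term coming from the other case is nonnegative and may simply be added, I obtain a single inequality valid for all $t\geq0$ and $x\in[0,b]$:
$$|f(t)-f(x)|\leq 4M_f(1+b^2)(t-x)^2+\left(1+\frac{|t-x|}{\delta}\right)\omega_{b+1}(f;\delta).$$
Applying the positive operator $K_n^a$, using $K_n^a(1;x)=1$ and $K_n^a((t-x)^2;x)=u_{n,2}^a(x)$ from Lemma \ref{lm1}, and bounding $K_n^a(|t-x|;x)\leq\sqrt{u_{n,2}^a(x)}$ by the Cauchy--Schwarz inequality, gives
$$|K_n^a(f;x)-f(x)|\leq 4M_f(1+b^2)u_{n,2}^a(x)+\omega_{b+1}(f;\delta)\left(1+\frac{1}{\delta}\sqrt{u_{n,2}^a(x)}\right).$$
Finally, choosing $\delta=\sqrt{u_{n,2}^a(x)}$ collapses the last factor to $2$ and produces the asserted estimate.
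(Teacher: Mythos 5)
Your proof is correct, and its operator-theoretic half is exactly the paper's argument: apply the positive operator $K_n^a$ to a pointwise inequality valid for all $t\geq 0$ and $x\in[0,b]$, use $K_n^a(1;x)=1$ and $K_n^a((t-x)^2;x)=u_{n,2}^a(x)$, bound $K_n^a(|t-x|;x)\leq\sqrt{u_{n,2}^a(x)}$ by Cauchy--Schwarz, and finally choose $\delta=\sqrt{u_{n,2}^a(x)}$. The difference lies in the pointwise inequality
\[
|f(t)-f(x)|\leq 4M_f(1+b^2)(t-x)^2+\bigg(1+\frac{|t-x|}{\delta}\bigg)\omega_{b+1}(f;\delta),
\]
which the paper does not prove but imports wholesale from the reference \cite{GAD}, whereas you derive it from scratch by splitting into the cases $t\in[0,b+1]$ and $t>b+1$: in the second case you use the growth bound $|f(t)-f(x)|\leq M_f(2+t^2+x^2)$ together with $t-x>1$ to get $2+t^2+x^2\leq(3+2b+2b^2)(t-x)^2\leq 4(1+b^2)(t-x)^2$. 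Your elementary steps all check out: $t\leq(1+b)(t-x)$ follows from $x\leq b\leq b(t-x)$, and $4(1+b^2)-(3+2b+2b^2)=2b^2-2b+1>0$ since this quadratic in $b$ has negative discriminant. So your version is self-contained where the paper's rests on a citation, at the cost only of length; both yield exactly the stated constant $4M_f(1+b^2)$ and the factor $2$ in front of the modulus of continuity.
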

\begin{proof}
From \cite{GAD}, for $x\in [0,b]$ and $t\in [0,\infty),$ we have
\begin{eqnarray*}
|f(t)-f(x)|\leq \displaystyle 4 M_{f}(1+b^{2})(t-x)^{2}+\bigg(1+\frac{|t-x|}
{\delta}\bigg)\omega_{b+1}(f;\delta), \delta>0.
\end{eqnarray*}
Applying $K_n^a(.;x)$ to the above inequality and then Cauchy-Schwarz inequality
to the above inequality, we obtain
\begin{eqnarray*}
|K_n^a(f;x)-f(x)|&\leq& 4M_f(1+b^2)K_n^a((t-x)^2;x)+\omega_{b+1}(f;\delta)\bigg(1+\frac{1}
{\delta}K_n^a(|t-x|;x)\bigg)\\
&\leq& 4M_f(1+b^2)u_{n,2}^a(x)+\omega_{b+1}(f;\delta)\bigg(1+\frac{1}{\delta}\sqrt{u_{n,2}
^a(x)}\bigg).
\end{eqnarray*}
By choosing $\delta=\sqrt{u_{n,2}^a(x)},$ we obtain the desired result.
\end{proof}

\begin{thm}\label{th4}
For each $f\in C_{\rho}[0,\infty)$, we have
\begin{eqnarray*}
\lim_{n\rightarrow\infty} \parallel K_n^a(f)-f \parallel_{\rho}=0.
\end{eqnarray*}
\end{thm}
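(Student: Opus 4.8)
The plan is to deduce this weighted convergence from the well-known weighted Korovkin-type theorem of Gadjiev (cf. \cite{AAA}, \cite{VGA}): if $\{K_n^a\}$ is a sequence of positive linear operators mapping $C_{\rho}[0,\infty)$ into $B_{\rho}[0,\infty)$ and
\begin{eqnarray*}
\lim_{n\rightarrow\infty}\parallel K_n^a(t^{\nu};\cdot)-x^{\nu}\parallel_{\rho}=0,\quad \nu=0,1,2,
\end{eqnarray*}
then $\lim_{n\rightarrow\infty}\parallel K_n^a(f)-f\parallel_{\rho}=0$ for every $f\in C_{\rho}[0,\infty)$. Thus the whole task reduces to verifying these three test-function conditions, and the explicit moment formulas of Lemma \ref{lm2} are exactly what is needed.

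First I would dispose of $\nu=0$ at once, since $T_{n,0}^a(x)=K_n^a(1;x)=1$ gives $\parallel K_n^a(1;\cdot)-1\parallel_{\rho}=0$. For $\nu=1$, the formula for $T_{n,1}^a$ in Lemma \ref{lm2} yields
\begin{eqnarray*}
K_n^a(t;x)-x=\frac{1}{n+1}\bigg(-x+\frac{ax}{1+x}+\frac{1}{2}\bigg),
\end{eqnarray*}
so that
\begin{eqnarray*}
\parallel K_n^a(t;\cdot)-x\parallel_{\rho}=\frac{1}{n+1}\sup_{x\geq0}\frac{1}{1+x^2}\bigg|-x+\frac{ax}{1+x}+\frac{1}{2}\bigg|.
\end{eqnarray*}
The supremum is a finite constant: the bracketed expression grows only linearly (with $\frac{ax}{1+x}\leq a$) while the weight $1+x^2$ grows quadratically, so the ratio is continuous on $[0,\infty)$ and tends to $0$ at infinity, hence bounded. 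Therefore the whole quantity is $O(n^{-1})$ and tends to $0$.

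For $\nu=2$ I would again invoke Lemma \ref{lm2}. The dominant contribution is
\begin{eqnarray*}
\frac{n^2x^2}{(n+1)^2}-x^2=-\frac{(2n+1)x^2}{(n+1)^2},
\end{eqnarray*}
while every remaining term of $T_{n,2}^a(x)-x^2$ carries a factor $\frac{n}{(n+1)^2}=O(n^{-1})$ or $\frac{1}{(n+1)^2}$ multiplying a rational function of $x$ whose growth is at most quadratic. Dividing through by $\rho(x)=1+x^2$ therefore leaves a quantity bounded by $C/n$ uniformly in $x$, so $\parallel K_n^a(t^2;\cdot)-x^2\parallel_{\rho}=O(n^{-1})\to0$.

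With all three conditions established, the theorem follows from the weighted Korovkin theorem. The only point requiring genuine care — and the part I would settle first — is the hypothesis that $K_n^a$ actually maps $C_{\rho}[0,\infty)$ into $B_{\rho}[0,\infty)$, i.e. that $\parallel K_n^a(f)\parallel_{\rho}$ is finite for each admissible $f$; this follows from the growth estimate $T_{n,2}^a(x)=O(1+x^2)$ of Lemma \ref{lm2} together with the positivity of the operators, but it must be checked before the Korovkin machinery applies. Everything else amounts to the routine bounding of the weighted ratios above.
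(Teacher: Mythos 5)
Your proposal is correct and follows essentially the same route as the paper: both invoke Gadjiev's weighted Korovkin theorem and verify the three test-function conditions $\nu=0,1,2$ using the explicit moment formulas of Lemma \ref{lm2}, with the same $O(n^{-1})$ bounds on the weighted norms. Your additional remark about checking that $K_n^a$ maps $C_{\rho}[0,\infty)$ into $B_{\rho}[0,\infty)$ is a point the paper leaves implicit, but it does not change the argument.
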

\begin{proof}
From \cite{AD}, we observe that it is sufficient to verify the
following three conditions :
\begin{eqnarray}\label{mg1}
\lim_{n\rightarrow\infty} \parallel K_n^a(t^{k};x)-x^{k}) \parallel_{\rho}=0,
\,\,  k=0,1,2.
\end{eqnarray}
Since $K_n^a(1;x)=1$, \,\, the condition in (\ref{mg1}) holds for $k=0$.
Also, by Lemma \ref{lm2} we have
\begin{eqnarray*}
\parallel K_n^a(t;x)-x)\parallel_{\rho}&=&\bigg\|\frac{1}{n+1}\bigg(-x
+\frac{ax}{1+x}+\frac{1}{2}\bigg)\bigg\|_{\rho}\\&\leq& \bigg\|\frac{1}{n+1}\bigg(a
+\frac{1}{2}-\frac{a}{1+x}-x\bigg)\bigg\|_{\rho}\\
&\leq& \frac{1}{n+1}\bigg(\bigg(a+\frac{1}{2}\bigg)\sup_{x\in [0,\infty)}
\frac{1}{1+x^2}+\sup_{x\in [0,\infty)}\frac{x}{1+x^2}+a\sup_{x\in [0,\infty)}
\frac{1}{(1+x)(1+x^2)}\bigg)\\
&\leq& \frac{1}{n+1}\bigg(2a+\frac{3}{2}\bigg),
\end{eqnarray*}
which implies that the condition in (\ref{mg1}) holds for $k=1.$\\
Similarly, we can write
\begin{eqnarray*}
\parallel K_n^a(t^{2};x)-x^{2}\parallel_{\rho} &\leq& \bigg\|\dfrac{1}{(n+1)^2}
\bigg(n^2x^2+n\bigg(x^2+2x+\dfrac{2ax^2}{1+x}\bigg)+\dfrac{a^2x^2}{(1+x)^2}
+\dfrac{2ax}{1+x}+\dfrac{1}{3}-(n+1)^2x^2\bigg)\bigg\|_{\rho}\\
&\leq& \frac{1}{(n+1)^2}\bigg((n+1)\sup_{x\in [0,\infty)}\frac{x^2}{1+x^2}
+2n\sup_{x\in [0,\infty)}\frac{x}{1+x^2}+2an\sup_{x\in [0,\infty)}\frac{x^2}
{(1+x)(1+x^2)}\\&&+2a\sup_{x\in [0,\infty)}\frac{x}{(1+x)(1+x^2)}+a^2
\sup_{x\in [0,\infty)}\frac{x^2}{(1+x)^2(1+x^2)}+\frac{1}{3}\sup_{x\in [0,\infty)}
\frac{1}{1+x^2}\bigg)\\&\leq& \frac{1}{(n+1)^2}\bigg((n+1)(2a+1)
+\bigg(2n+a^2+\frac{1}{3}\bigg)\bigg),
\end{eqnarray*}
which implies that the equation (\ref{mg1}) holds for $k=2.$\\
This completes the proof of theorem.
\end{proof}

Let $f\in C_{\rho}[0,\infty).$ The weighted modulus of continuity is defined as :
$$\Omega_{2}(f,\delta)= \displaystyle\sup_{x\geq0,0<h\leq\delta}
\frac{|f(x+h)-f(x)|}{1+(x+h)^2}.$$

\begin{lemma}\label{l6}\cite{IYN}
Let $f\in C_{\rho}[0,\infty),$ then:
\begin{enumerate}[(i)]
\item $\Omega(f,\delta)$ is a monotone increasing function of $\delta;$\\
\item $\displaystyle\lim_{\delta\rightarrow 0^{+}}\Omega(f,\delta)=0;$\\
\item for each $m\in \mathbb{N}, \Omega(f,m\delta)\leq m\Omega(f,\delta);$\\
\item for each $\lambda\in [0,\infty), \Omega(f,\lambda\delta)\leq
(1+\lambda)\Omega(f,\delta).$
\end{enumerate}
\end{lemma}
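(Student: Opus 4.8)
The plan is to read off all four properties directly from the definition
$$\Omega(f,\delta)=\sup_{x\geq 0,\,0<h\leq\delta}\frac{|f(x+h)-f(x)|}{1+(x+h)^2},$$
treating (i), (iii) and (iv) as formal consequences of the supremum structure and reserving the genuine work for (ii). Property (i) is immediate: enlarging $\delta$ enlarges the index set $\{h:0<h\leq\delta\}$ over which the supremum is taken, so $\Omega(f,\cdot)$ cannot decrease. Throughout I would keep in mind that the hypothesis $f\in C_\rho[0,\infty)$ is exactly what is needed to make the tail of the quotient small.

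For (ii) I would pass to the normalized function $g(x):=f(x)/(1+x^2)$. Since $f\in C_\rho[0,\infty)$, $g$ is continuous on $[0,\infty)$ with a finite limit at infinity, hence extends continuously to the one-point compactification; consequently $g$ is bounded, say $|g|\leq M_g$, and uniformly continuous, so its ordinary modulus $\omega(g,\delta)$ tends to $0$ as $\delta\to 0^{+}$. The key device is the algebraic identity, obtained by writing $f=(1+x^2)g$,
$$\frac{f(x+h)-f(x)}{1+(x+h)^2}=\big(g(x+h)-g(x)\big)+g(x)\,\frac{h(2x+h)}{1+(x+h)^2}.$$
The first summand is at most $\omega(g,\delta)$ whenever $0<h\leq\delta$. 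For the second I would use $2x+h\leq 2(x+h)$ together with $\frac{x+h}{1+(x+h)^2}\leq\frac12$ to obtain $\frac{h(2x+h)}{1+(x+h)^2}\leq h\leq\delta$, so that term is bounded by $M_g\delta$. Taking the supremum over $x\geq 0$ and $0<h\leq\delta$ gives $\Omega(f,\delta)\leq\omega(g,\delta)+M_g\delta$, which tends to $0$, proving (ii).

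For (iii) I would telescope with equal steps: for $0<h\leq m\delta$ write $f(x+h)-f(x)=\sum_{k=0}^{m-1}\big[f(x+(k+1)h/m)-f(x+kh/m)\big]$, where each step $h/m\leq\delta$. Dividing by $1+(x+h)^2$ and noting $1+(x+h)^2\geq 1+(x+(k+1)h/m)^2$ since the argument only increases, each term is at most $\Omega(f,h/m)\leq\Omega(f,\delta)$ by (i); summing yields $\Omega(f,m\delta)\leq m\,\Omega(f,\delta)$. Property (iv) then follows from monotonicity and (iii): for $\lambda\geq 0$ put $m=\lceil\lambda\rceil$, so $\lambda\delta\leq m\delta$ and $\Omega(f,\lambda\delta)\leq\Omega(f,m\delta)\leq m\,\Omega(f,\delta)\leq(1+\lambda)\Omega(f,\delta)$, the case $\lambda=0$ being trivial.

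The main obstacle is step (ii). The crude bound $|f(x+h)-f(x)|\leq M_f\big((1+(x+h)^2)+(1+x^2)\big)$ only shows $\Omega$ is bounded, not that it decays, so the essential point is the decomposition that isolates a genuinely small oscillation term $g(x+h)-g(x)$ from a weight-ratio term whose size is controlled by $\delta$ alone. Establishing the uniform continuity and boundedness of $g$ from the existence of the limit at infinity is precisely what forces the decay, and it is this passage to $g$ — rather than any of the elementary telescoping arguments — that carries the weight of the proof.
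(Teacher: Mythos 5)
Your proof is correct in all four parts, but there is nothing in the paper to compare it against: the paper states this lemma with the citation \cite{IYN} and gives no proof of its own, so your argument supplies what the authors deliberately left to the literature. Your treatment of (i), (iii), (iv) is the standard one and is executed correctly: in (iii) the essential observation that $1+(x+h)^2\geq 1+(x+(k+1)h/m)^2$ lets each telescoping increment be measured against its own (smaller) weight, so each term is bounded by $\Omega(f,h/m)\leq\Omega(f,\delta)$, and (iv) follows cleanly from (i) and (iii) with $m=\lceil\lambda\rceil\leq 1+\lambda$. The crux, as you say, is (ii), and your decomposition
$\frac{f(x+h)-f(x)}{1+(x+h)^2}=\big(g(x+h)-g(x)\big)+g(x)\,\frac{h(2x+h)}{1+(x+h)^2}$
with $g(x)=f(x)/(1+x^2)$, combined with $2x+h\leq 2(x+h)$ and $t/(1+t^2)\leq\tfrac12$, correctly yields $\Omega(f,\delta)\leq\omega(g,\delta)+M_g\delta\to 0$. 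One small point worth patching: the paper's definition of $C_{\rho}[0,\infty)$ only requires that $\lim_{x\rightarrow\infty}|f(x)|(1+x^2)^{-1}$ exist, i.e.\ that $|g|$ have a limit, whereas your argument needs $g$ itself to converge at infinity (this is what licenses the extension to the compactification, hence the boundedness and uniform continuity of $g$). The implication does hold and is one line: if $|g|\to L>0$ then $g$ is eventually nonvanishing, hence of constant sign by the intermediate value theorem, so $g\to L$ or $g\to -L$; if $L=0$ then $g\to 0$ directly. With that remark added, your proof is complete and self-contained.
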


\begin{thm}\label{tm4}
Let $f\in C_{\rho}[0,\infty),$ then there exists a positive constant $M_1$ such that
\begin{eqnarray*}
\displaystyle\sup_{x\in[0,\infty)}\frac{|K_n^a(f,x)-f(x)|}{(1+x^2)^{\frac{5}{2}}}
\leq M_1\Omega\bigg(f,n^{-1/2}\bigg).
\end{eqnarray*}
\end{thm}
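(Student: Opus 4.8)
The plan is to reduce the weighted estimate to the central moment bounds of Lemma \ref{lm1} via the standard two–parameter decomposition of the weighted modulus. First I would establish, for every $\delta>0$ and all $t,x\geq 0$, the pointwise inequality
$$|f(t)-f(x)| \leq 2(1+x^2)(1+(t-x)^2)\left(1+\frac{|t-x|}{\delta}\right)\Omega(f,\delta).$$
This follows from the definition of $\Omega$, which gives $|f(t)-f(x)|\leq(1+t^2)\,\Omega(f,|t-x|)$, combined with property (iv) of Lemma \ref{l6} applied with $\lambda=|t-x|/\delta$, and the elementary inequality $1+t^2\leq 2(1+x^2)(1+(t-x)^2)$ to transfer the weight from $t$ to $x$.

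Next I would apply the positive linear operator $K_n^a(\cdot;x)$ and use $K_n^a(1;x)=1$ from Lemma \ref{lm1}(i). Expanding $(1+(t-x)^2)(1+|t-x|/\delta)$ into the four summands $1$, $|t-x|/\delta$, $(t-x)^2$, $|t-x|^3/\delta$ yields
$$\frac{|K_n^a(f;x)-f(x)|}{2(1+x^2)\,\Omega(f,\delta)} \leq 1+\frac{1}{\delta}K_n^a(|t-x|;x)+u_{n,2}^a(x)+\frac{1}{\delta}K_n^a(|t-x|^3;x).$$
The odd moments I would control by the Cauchy--Schwarz inequality for positive operators, namely $K_n^a(|t-x|;x)\leq\sqrt{u_{n,2}^a(x)}$ and $K_n^a(|t-x|^3;x)\leq\sqrt{u_{n,2}^a(x)}\,\sqrt{u_{n,4}^a(x)}$, reducing everything to $u_{n,2}^a$ and $u_{n,4}^a$.

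The crux is to upgrade the pointwise order statements of Lemma \ref{lm1}(iii) into uniform-in-$x$ bounds carrying the correct powers of the weight, namely $u_{n,2}^a(x)\leq C(1+x^2)/n$ and $u_{n,4}^a(x)\leq C(1+x^2)^2/n^2$ with $C=C(a)$ independent of $n$ and $x$. The first can be read directly from the explicit formula in Lemma \ref{lm1}(i); the second requires inspecting the rational structure from Lemma \ref{lm1}(ii) and checking that the degree of the numerator in $x$ does not exceed that of $(1+x^2)^2$. This uniformity is the main obstacle, since the $O$-constants in Lemma \ref{lm1} are a priori $x$-dependent and the whole point of the weight $(1+x^2)^{5/2}$ is to absorb this growth.

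Finally, choosing $\delta=n^{-1/2}$, these bounds give $\frac{1}{\delta}K_n^a(|t-x|;x)\leq C(1+x^2)^{1/2}$, $u_{n,2}^a(x)\leq C(1+x^2)$, and $\frac{1}{\delta}K_n^a(|t-x|^3;x)\leq C(1+x^2)^{3/2}/n$. The largest power of the weight appearing is $(1+x^2)^{3/2}$, so the bracket is at most $C(1+x^2)^{3/2}$. Multiplying by the prefactor $2(1+x^2)$ produces exactly $(1+x^2)^{5/2}$, and dividing through by $(1+x^2)^{5/2}$ gives the assertion with $M_1=2C$.
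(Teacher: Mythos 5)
Your proposal is correct and follows essentially the same route as the paper's own proof: the same pointwise inequality $|f(t)-f(x)|\leq 2(1+x^2)(1+(t-x)^2)\left(1+\frac{|t-x|}{\delta}\right)\Omega(f,\delta)$, application of the positive operator, Cauchy--Schwarz on the odd-order terms, the uniform moment bounds $u_{n,2}^a(x)=O\left(\frac{1+x^2}{n}\right)$ and $u_{n,4}^a(x)=O\left(\frac{(1+x^2)^2}{n^2}\right)$, and the choice $\delta=n^{-1/2}$. The only differences are cosmetic (you expand the product into four summands before applying Cauchy--Schwarz, and you flag explicitly the uniformity in $x$ of the moment bounds, which the paper asserts without comment), so there is nothing further to add.
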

\begin{proof}
For $t\geq0, x\in [0,\infty)$ and $\delta>0,$ by the definition of
$\Omega(f,\delta)$ and Lemma \ref{l6}, we get
\begin{eqnarray*}
|f(t)-f(x)|&\leq& (1+(x+|x-t|^2))\Omega(f,|t-x|)\\
&\leq& 2(1+x^2)(1+(t-x)^2)\bigg(1+\frac{|t-x|}{\delta}\bigg)\Omega(f,\delta).
\end{eqnarray*}
Since $K_n^a$ is linear and positive, we have
\begin{eqnarray}\label{mg2}
|K_n^a(f,x)-f(x)|\leq2(1+x^2)\Omega(f,\delta)\bigg\{1+K_n^a((t-x)^2,x)
+K_n^a\bigg((1+(t-x)^2)\frac{|t-x|}{\delta},x\bigg)\bigg\}.
\end{eqnarray}
Using Lemma \ref{lm1}, we have
\begin{eqnarray}\label{mg3}
K_n^a((t-x)^2,x)\leq M_2\frac{1+x^2}{n}, \mbox{for some positive number} \,\,M_2.
\end{eqnarray}
Applying Cauchy-Schwarz inequality to the second term of equation
(\ref{mg2}), we have
\begin{eqnarray}\label{mg4}
K_n^a\bigg((1+(t-x)^2)\frac{|t-x|}{\delta},x\bigg)\leq\frac{1}{\delta}
\sqrt{K_n^a((t-x)^2,x)}+
\frac{1}{\delta}\sqrt{K_n^a((t-x)^4,x)}\sqrt{K_n^a((t-x)^2,x)}.
\end{eqnarray}
By Lemma \ref{lm1} and choosing $\delta=\dfrac{1}{n^{1/2}}$, there exists a positive constant $M_3$ such that
\begin{eqnarray}\label{mg5}
\sqrt{\bigg(K_n^a(t-x)^4,x)\bigg)}\leq M_3\sqrt{\frac{(1+x^2)^2}{n^2}}.
\end{eqnarray}
Combining the estimates of (\ref{mg2})-(\ref{mg5}) and taking $M_1=2(1+M_2+\sqrt{M_2}
+M_3\sqrt{M_2})$, we obtain the required result.
\end{proof}

\subsection{Simultaneous approximation}
\begin{thm}\label{th5}
Let $f\in C_{\gamma}[0,\infty).$ If $f^{(r)}$ exists at a point
$x\in(0,\infty),$ then we have
\begin{eqnarray*}
\lim_{n\rightarrow \infty}\bigg(\frac{d^r}{d\omega^r}K_n^a(f;\omega)
\bigg)_{\omega=x}=f^{(r)}(x).
\end{eqnarray*}
\end{thm}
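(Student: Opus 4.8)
The plan is to transfer the $r$-fold $\omega$-differentiation onto the basis functions $W_{n,k}^a$, for which Lemma \ref{lm3} supplies an explicit formula, and to exploit the existence of $f^{(r)}(x)$ through a Taylor expansion with Peano remainder. Before anything else I would note that, by Lemma \ref{lm3} and the decay of $W_{n,k}^a(\omega)$, the series defining $K_n^a(f;\omega)$ may be differentiated $r$ times term by term, uniformly on a neighbourhood of $x$, so that $$\frac{d^r}{d\omega^r}K_n^a(f;\omega)=(n+1)\sum_{k=0}^{\infty}\Big(\frac{d^r}{d\omega^r}W_{n,k}^a(\omega)\Big)\int_{\frac{k}{n+1}}^{\frac{k+1}{n+1}}f(t)\,dt.$$ Then, since $f^{(r)}(x)$ exists, I would write $$f(t)=\sum_{i=0}^{r}\frac{f^{(i)}(x)}{i!}(t-x)^i+\varepsilon(t,x)(t-x)^r,$$ where $\varepsilon(t,x)\to0$ as $t\to x$ and, because $f\in C_\gamma[0,\infty)$, the factor $\varepsilon(t,x)$ grows at most polynomially in $t$ on all of $[0,\infty)$. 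Substituting this splits the derivative, evaluated at $\omega=x$, into a polynomial part $\Sigma_1$ and a remainder part $\Sigma_2$.

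For $\Sigma_1$ I would handle each $\frac{d^r}{d\omega^r}K_n^a((t-x)^i;\omega)\big|_{\omega=x}$ with $0\le i\le r$ separately. Expanding $(t-x)^i=\sum_{m=0}^i\binom{i}{m}(\omega-x)^{i-m}(t-\omega)^m$ rewrites $K_n^a((t-x)^i;\omega)$ as a combination of the central-moment functions $u_{n,m}^a(\omega)$ with coefficients $\binom{i}{m}(\omega-x)^{i-m}$; the Leibniz rule, together with the fact that $\frac{d^l}{d\omega^l}(\omega-x)^{i-m}$ vanishes at $\omega=x$ unless $l=i-m$, collapses each such term into a finite combination of derivatives of the $u_{n,m}^a$. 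Using Lemma \ref{lm1}, where $u_{n,m}^a(x)=O(n^{-[(m+1)/2]})$, and the recurrence it rests on, I expect every contribution with $i<r$ to tend to $0$ and the $i=r$ contribution to tend to $r!$, whence $\Sigma_1\to\frac{f^{(r)}(x)}{r!}\,r!=f^{(r)}(x)$.

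For $\Sigma_2$ I would insert Lemma \ref{lm3}, writing $\frac{d^r}{d\omega^r}W_{n,k}^a(\omega)\big|_{\omega=x}$ as $W_{n,k}^a(x)$ times a sum of terms $n^i(k-nx)^j q_{i,j,r}(x)/(p(x))^r$ with $2i+j\le r$, and then re-express each factor $(k-nx)^j$ as a combination of powers of $\frac{k}{n+1}-x$ so as to bring in the central moments $\mu_{n,s}^a(x)$ of Lemma \ref{l1}. Given $\eta>0$, I would choose $\delta>0$ with $|\varepsilon(t,x)|<\eta$ for $|t-x|<\delta$ and, since each interval $[\frac{k}{n+1},\frac{k+1}{n+1}]$ has length $\frac{1}{n+1}$, split the sum over $k$ into a near range, where $|\frac{k}{n+1}-x|<\delta$, and a far range. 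On the near range, $|\varepsilon|<\eta$ reduces a typical contribution to a constant times $\eta\,n^{i+l}\mu_{n,l+r}^a(x)$ with $l\le j$; the constraint $2i+j\le r$ gives $i+l\le[(l+r+1)/2]$, so the order $\mu_{n,l+r}^a(x)=O(n^{-[(l+r+1)/2]})$ makes this $O(\eta)$ once the $n+1$ prefactor is absorbed by the integration length $\frac{1}{n+1}$. On the far range, the polynomial boundedness of $\varepsilon$ together with an auxiliary factor $(\frac{k}{n+1}-x)^{2s}/\delta^{2s}\ge1$ for large $s$ brings in arbitrarily high central moments and forces a net negative power of $n$, so the far contribution tends to $0$. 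Letting $n\to\infty$ and then $\eta\to0$ yields $\Sigma_2\to0$.

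The hard part will be the remainder $\Sigma_2$: Lemma \ref{lm3} introduces factors $n^i(k-nx)^j$ that grow with $n$, and the Kantorovich normalization adds a further factor $n+1$, so the needed decay must be squeezed out of the central moments with care. The essential bookkeeping is to match the exponent constraint $2i+j\le r$ against the moment orders $O(n^{-[(l+r+1)/2]})$ and to confirm that the $n+1$ prefactor is exactly cancelled by the integration length $\frac{1}{n+1}$; this balancing is the most delicate point of the argument.
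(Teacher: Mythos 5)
Your proposal is correct and follows essentially the same route as the paper's proof: a Taylor expansion with Peano remainder splitting the differentiated operator into a polynomial part and a remainder part, reduction of the polynomial part to moment asymptotics, and treatment of the remainder via Lemma \ref{lm3} together with a near/far split at $\delta$ and the moment orders supplied by Corollary \ref{c2} and Lemma \ref{lm1}. The only cosmetic differences are that the paper expands $(t-x)^{\nu}$ in powers of $t$ and invokes the raw-moment asymptotics of Lemma \ref{lm2}, whereas you use central moments plus the Leibniz rule (which rests on the same implicit fact, shared by the paper, that $x$-differentiation preserves the $n$-orders of the moment expansions), and the paper performs the near/far split on the integration variable $t$ rather than on the summation index $k$.
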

\begin{proof}
By our hypothesis, we have
\begin{eqnarray*}
f(t)= \sum_{\nu=0}^{r}\frac{f^{(\nu)}(x)}{\nu!}(t-x)^\nu+\psi(t,x)
(t-x)^r, \,\, t\in[0,\infty),
\end{eqnarray*}
where the function $\psi(t,x)\rightarrow 0$ as $t\rightarrow x.$
From the above equation, we may write
\begin{eqnarray*}
\bigg(\frac{d^r}{d\omega^r}K_n^a(f(t);\omega)\bigg)_{\omega=x}&=&
\sum_{\nu=0}^{r}\frac{f^{(\nu)}(x)}{\nu!}\bigg(\frac{d^r}{d\omega^r}
K_n^a(t-x)^{\nu};\omega)\bigg)_{\omega=x}
+\bigg(\frac{d^r}{d\omega^r}K_n^a(\psi(t,x)(t-x)^r;\omega)\bigg
)_{\omega=x}\\ &=:& I_1+I_2, \mbox{say}.
\end{eqnarray*}
Now, we estimate $I_1.$
\begin{eqnarray*}
I_1&=&\sum_{\nu=0}^{r}\frac{f^{(\nu)}(x)}{\nu!}\bigg\{\frac{d^r}{d\omega^r}
\bigg(\sum_{j=0}^{\nu}{\nu \choose j}(-x)^{\nu-j}K_{n}^a(t^j;\omega)\bigg)
_{\omega=x}\bigg\}\\&=& \sum_{\nu=0}^{r}\frac{f^{(\nu)}(x)}{\nu!}\sum_{j=0}
^{\nu}{\nu \choose j}(-x)^{\nu-j}\bigg(\frac{d^r}{d\omega^r}K_{n}^a(t^j;\omega)
\bigg)_{\omega=x}\\&=& \sum_{\nu=0}^{r-1}\frac{f^{(\nu)}(x)}{\nu!}\sum_{j=0}
^{\nu}{\nu \choose j}(-x)^{\nu-j}\bigg(\frac{d^r}{d\omega^r}K_{n}^a(t^j;\omega)
\bigg)_{\omega=x}+\frac{f^{(r)}(x)}{r!}\sum_{j=0}^{r}{r \choose j}(-x)^{r-j}
\bigg(\frac{d^r}{d\omega^r}K_{n}^a(t^j;\omega)\bigg)_{\omega=x}\\
&:=& I_3+I_4, \mbox{say.}
\end{eqnarray*}
First, we estimate $I_4.$
\begin{eqnarray*}
I_4&=&\frac{f^{(r)}(x)}{r!}\sum_{j=0}^{r-1}{r \choose j}(-x)^{r-j}
\bigg(\frac{d^r}{d\omega^r}K_{n}^a(t^j;\omega)\bigg)_{\omega=x}
+\frac{f^{(r)}(x)}{r!}\bigg(\frac{d^r}{d\omega^r}K_{n}^a(t^r;\omega)\bigg)
_{\omega=x}\\&:=& I_5+I_6, \mbox{say}.
\end{eqnarray*}
By using Lemma \ref{lm2}, we get\\
$I_6= f^{(r)}(x)+O\bigg(\dfrac{1}{n}\bigg), I_3=O\bigg(\dfrac{1}{n}\bigg)$
and $I_5= O\bigg(\dfrac{1}{n}\bigg).$\\
Combining the above estimates, for each $x\in (0,\infty)$ we obtain $I_1
\rightarrow f^{(r)}(x)$ as $n\rightarrow \infty.$\\

Since $\psi(t,x)\rightarrow 0$ as $t\rightarrow x,$ for a given $\epsilon>0$
there exists a $\delta>0$ such that $|\psi(t,x)|<\epsilon$ whenever $|t-x|<\delta.$
For $|t-x|\geq\delta, |\psi(t,x)|\leq M|t-x|^{\beta},$ for some $M, \beta>0.$\\
By making use of Lemma \ref{lm3}, we have
\begin{eqnarray*}
|I_2|\leq (n+1)\sum_{k=0}^{\infty}\sum_{\substack {2i+j\leq r\\ i,j\geq 0}}
n^i|k-nx|^j\frac{|q_{i,j,r}(x)|}{(p(x))^r}W_{n,k}^a(x)\int_{\frac{k}{n+1}}
^{\frac{k+1}{n+1}}\psi(t,x)(t-x)^r dt.
\end{eqnarray*}
\begin{eqnarray*}
|I_2|&\leq& (n+1)\sum_{k=0}^{\infty}\sum_{\substack {2i+j\leq r\\ i,j\geq 0}} n^i |k-nx|^j\frac{|q_{i,j,r}(x)|}{(p(x))^r}W_{n,k}^a(x)\bigg(\epsilon\int_{|t-x|<\delta}
|t-x|^r dt+M\int_{|t-x|\geq \delta}|t-x|^{r+\beta}dt\bigg)\\
&:=&I_7+I_8, \,\,\mbox{say}.
\end{eqnarray*}
Let $S=\displaystyle\sup_{\substack {2i+j\leq r\\ i,j\geq 0}}\frac{|q_{i,j,r}(x)|}
{(p(x))^r}$ and by applying the Schwarz inequality, Corollary \ref{c2} and Lemma \ref{lm1},
we get
\begin{eqnarray*}
|I_7|&\leq& \epsilon(n+1)^{\frac{1}{2}}S\sum_{k=0}^{\infty}\sum_{\substack {2i+j\leq r\\
i,j\geq 0}} n^i|k-nx|^jW_{n,k}^a(x)\bigg(\int_{\frac{k}{n+1}}
^{\frac{k+1}{n+1}}(t-x)^{2r} dt\bigg)^{\frac{1}{2}}\\&\leq& \epsilon(n+1)
^{\frac{1}{2}}S\sum_{\substack {2i+j\leq r\\ i,j\geq 0}}n^i \bigg(\sum_{k=0}
^{\infty}(k-nx)^{2j}W_{n,k}^a(x)\bigg)^{\frac{1}{2}}\bigg(\sum_{k=0}
^{\infty}W_{n,k}^a(x)\int_{\frac{k}{n+1}}^{\frac{k+1}{n+1}}(t-x)^{2r} dt\bigg)
^{\frac{1}{2}}\\&=& \epsilon . S \sum_{\substack {2i+j\leq r\\ i,j\geq 0}}
n^i \bigg(\sum_{k=0}^{\infty}(k-nx)^{2j}W_{n,k}^a(x)\bigg)^{\frac{1}{2}}
\bigg((n+1)\sum_{k=0}^{\infty}W_{n,k}^a(x)\int_{\frac{k}{n+1}}^{\frac{k+1}{n+1}}
(t-x)^{2r} dt\bigg)^{\frac{1}{2}}\\&\leq& \epsilon . S \sum_{\substack
{2i+j\leq r\\ i,j\geq 0}}O\bigg(n^{\frac{2i+j}{2}}\bigg)O\bigg(n^{-r/2}\bigg)
=\epsilon . O(1).
\end{eqnarray*}
Since $\epsilon >0$ is arbitrary, $I_7\rightarrow 0$ as $n\rightarrow \infty.$
Let $s(\in \mathbb{N})>r+\beta.$ Again, by using Schwarz inequality, Corollary \ref{c2}
and Lemma \ref{lm1}, we obtain
\begin{eqnarray*}
I_8&\leq& M S(n+1)\sum_{k=0}^{\infty}\sum_{\substack {2i+j\leq r\\ i,j\geq 0}}
n^i|k-nx|^jW_{n,k}^a (x)\int_{|t-x|\geq \delta}|t-x|^{r+\beta}dt\\
&\leq& \frac{M^{\prime}(n+1)}{\delta^{s-r-\beta}}\sum_{k=0}^{\infty}\sum_{\substack
{2i+j\leq r\\ i,j\geq 0}}n^i|k-nx|^jW_{n,k}^a (x)\int_{\frac{k}{n+1}}^{\frac
{k+1}{n+1}}|t-x|^{s} dt,  \,\, \mbox{where}\,\, M S=M^{\prime}\\
&\leq&  \frac{M^{\prime}(n+1)^{1/2}}{\delta^{s-r-\beta}}\sum_{k=0}
^{\infty}\sum_{\substack{2i+j\leq r\\ i,j\geq 0}}n^i|k-nx|^jW_{n,k}^a(x)
\bigg(\int_{\frac{k}{n+1}}^{\frac{k+1}{n+1}}|t-x|^{2s} dt\bigg)^{1/2}\\
&\leq&  \frac{M^{\prime}}{\delta^{s-r-\beta}}\sum_{\substack {2i+j\leq r\\
i,j\geq 0}}n^i\bigg(\sum_{k=0}^{\infty}W_{n,k}^a(x)(k-nx)^{2j}\bigg)^{1/2}
\bigg((n+1)\sum_{k=0}^{\infty}W_{n,k}^a(x)\int_{\frac{k}{n+1}}^{\frac{k+1}{n+1}}
(t-x)^{2s} dt\bigg)^{1/2}\\&=&  \frac{M^{\prime}}{\delta^{s-r-\beta}}
\sum_{\substack {2i+j\leq r\\ i,j\geq 0}}n^iO(n^{j/2})O(n^{-s/2})
= \frac{M^{\prime}}{\delta^{s-r-\beta}}O(n^{\frac{(r-s)}{2}})
\end{eqnarray*}
which implies that $I_8\rightarrow 0,$ as $n\rightarrow \infty.$\\
Now, by combining the estimates of $I_7$ and $I_8,$ we get $I_2\rightarrow 0$
as $n\rightarrow \infty.$\\
Thus, from the estimates of $I_1$ and $I_2,$ we obtained the required result.
\end{proof}

\subsection{Voronovskaja type result}
\begin{thm}\label{thm6}
Let $f\in C_{\gamma}[0,\infty).$  If $f$ admits a derivative of order $(r+2)$ at a
fixed point $x\in (0,\infty),$ then we have
\begin{eqnarray*}
\lim_{n\rightarrow\infty}n\bigg(\bigg(\frac{d^r}{d\omega^r}K_n^a(f;\omega)\bigg)
_{\omega=x}-f^{(r)}(x)\bigg)=\displaystyle\sum_{\nu=1}^{r+2}Q(\nu,r,a,x)f^{(\nu)}(x),
\end{eqnarray*}
where $Q(\nu,r,a,x)$ are certain rational functions of $x$ depending on the
parameter $a,r,\nu.$
\end{thm}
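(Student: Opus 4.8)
The plan is to reuse the decomposition from the proof of Theorem \ref{th5}, but to expand $f$ two orders further and to keep the $n^{-1}$ terms that were discarded there. Since $f$ admits a derivative of order $r+2$ at the fixed point $x>0$, I would start from the Taylor expansion
$$f(t)=\sum_{\nu=0}^{r+2}\frac{f^{(\nu)}(x)}{\nu!}(t-x)^{\nu}+\psi(t,x)(t-x)^{r+2},\qquad t\in[0,\infty),$$
where $\psi(t,x)\to0$ as $t\to x$ and, by the polynomial growth of $f\in C_{\gamma}[0,\infty)$, one has $|\psi(t,x)|\le M|t-x|^{\beta}$ for some $M,\beta>0$ whenever $|t-x|\ge\delta$. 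Applying $\big(\tfrac{d^r}{d\omega^r}K_n^a(\,\cdot\,;\omega)\big)_{\omega=x}$ and using linearity splits the quantity into a polynomial part
$$J_1=\sum_{\nu=0}^{r+2}\frac{f^{(\nu)}(x)}{\nu!}\bigg(\frac{d^r}{d\omega^r}K_n^a\big((t-x)^{\nu};\omega\big)\bigg)_{\omega=x}$$
and a remainder $J_2=\big(\tfrac{d^r}{d\omega^r}K_n^a(\psi(t,x)(t-x)^{r+2};\omega)\big)_{\omega=x}$. The goal is to prove $nJ_2\to0$ and that $n\big(J_1-f^{(r)}(x)\big)$ converges to the asserted sum.

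For $J_2$ I would copy the estimation of $I_2$ in Theorem \ref{th5} essentially verbatim: differentiate $W_{n,k}^a$ via Lemma \ref{lm3}, split the integral over $|t-x|<\delta$ (where $|\psi|<\epsilon$) and over $|t-x|\ge\delta$, and apply the Cauchy--Schwarz inequality together with Corollary \ref{c2} and Lemma \ref{lm1}. The only difference is bookkeeping: the remainder now carries $(t-x)^{r+2}$ in place of $(t-x)^{r}$, so the factor $\big((n+1)\sum_{k}W_{n,k}^a(x)\int (t-x)^{2(r+2)}dt\big)^{1/2}$ contributes $O(n^{-(r+3)/2})$ instead of $O(n^{-(r+1)/2})$. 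Tracking the exponents through $\sum_{2i+j\le r}n^{i}O(n^{j/2})O(n^{-(r+3)/2})=\sum_{2i+j\le r}O(n^{(2i+j-r-2)/2})\le O(n^{-1})$ then yields $J_2=\epsilon\,O(n^{-1})+o(n^{-1})$, whence $nJ_2\to0$ since $\epsilon$ is arbitrary. This is precisely where the hypothesis ``derivative of order $r+2$'' (two orders beyond what Theorem \ref{th5} required) pays off: the two extra powers of $(t-x)$ supply the decisive additional factor $n^{-1}$.

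The substance is the evaluation of $J_1$. Writing $K_n^a((t-x)^\nu;\omega)=\sum_{j=0}^{\nu}\binom{\nu}{j}(-x)^{\nu-j}T_{n,j}^a(\omega)$, each moment $T_{n,j}^a$ is, for fixed $n$, an explicit rational function of $\omega$ (Lemma \ref{lm2}) admitting the expansion $T_{n,j}^a(\omega)=\omega^{j}+n^{-1}p_j(\omega,a)+o(n^{-1})$. I would differentiate this expansion $r$ times in $\omega$ and evaluate at the fixed $x$, obtaining
$$\bigg(\frac{d^r}{d\omega^r}K_n^a\big((t-x)^{\nu};\omega\big)\bigg)_{\omega=x}=r!\,\delta_{\nu,r}+\frac1n\,c_{\nu,r}(x,a)+o(n^{-1}),$$
where $r!\,\delta_{\nu,r}$ arises from $\big(\tfrac{d^r}{d\omega^r}(\omega-x)^{\nu}\big)_{\omega=x}$ and $c_{\nu,r}(x,a)=\sum_{j=0}^{\nu}\binom{\nu}{j}(-x)^{\nu-j}\big(\tfrac{d^r}{d\omega^r}p_j(\omega,a)\big)_{\omega=x}$ is a rational function of $x$. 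The term $\nu=r$ then produces $\tfrac{f^{(r)}(x)}{r!}\,r!=f^{(r)}(x)$, which cancels the subtracted $f^{(r)}(x)$, while all other leading terms vanish. Since $T_{n,0}^a\equiv1$ forces $c_{0,r}=0$, the limit of $n(J_1-f^{(r)}(x))$ equals $\sum_{\nu=1}^{r+2}\tfrac{1}{\nu!}c_{\nu,r}(x,a)f^{(\nu)}(x)$, and setting $Q(\nu,r,a,x)=\tfrac{1}{\nu!}c_{\nu,r}(x,a)$ gives the claimed formula.

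The main obstacle I anticipate is the step of differentiating the $n^{-1}$-asymptotic expansion of $T_{n,j}^a(\omega)$ term by term at $\omega=x$, i.e. justifying that the $o(n^{-1})$ remainder survives $r$ differentiations in $\omega$; this does not follow from the pointwise expansion of Lemma \ref{lm2} alone. It rests on the fact that each $T_{n,j}^a$ (equivalently each $\upsilon_{n,i}^a$) is a genuine rational function of $\omega$ whose $n$-dependence enters explicitly through the recurrences (\ref{eq3}) and (\ref{p9}), so that its expansion in powers of $n^{-1}$ has coefficients that are themselves rational in $\omega$ and a remainder that is $o(n^{-1})$ together with its $\omega$-derivatives, locally uniformly on $(0,\infty)$ (where the denominators $1+\omega$ stay bounded away from $0$). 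I would establish this differentiable expansion by induction on the moment order using (\ref{eq3}), as each step preserves both the rational structure and the order of the estimate; once it is in hand, the remainder of the argument is the bookkeeping described above.
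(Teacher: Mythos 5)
Your proposal is correct and follows essentially the same route as the paper: the identical Taylor expansion to order $r+2$, the same split into a polynomial part and a remainder estimated exactly as $I_2$ in Theorem \ref{th5} (where the two extra powers of $(t-x)$ supply the needed $n^{-1}$), and the same appeal to Lemma \ref{lm2} for the polynomial part, with your explicit justification of differentiating the moment expansion term by term addressing a point the paper passes over silently. One cosmetic slip: by Lemma \ref{lm1}(iii) the factor $\bigl((n+1)\sum_{k}W_{n,k}^a(x)\int(t-x)^{2(r+2)}dt\bigr)^{1/2}$ is $O(n^{-(r+2)/2})$ rather than $O(n^{-(r+3)/2})$, which is in fact what your subsequent exponent count $(2i+j-r-2)/2$ and final bound $O(n^{-1})$ already use.
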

\begin{proof}
From the Taylor's theorem, we have
\begin{eqnarray}\label{eq2}
f(t)=\sum_{\nu=0}^{r+2}\frac{f^{(\nu)}(x)}{\nu!}(t-x)^{\nu}+\psi(t,x)(t-x)^{r+2},
 \,\, t\in[0,\infty)
\end{eqnarray}
where $\psi(t,x)\rightarrow 0$ as $t\rightarrow x$ and $\psi(t,x)=O(t-x)^{\gamma}.$\\
From the equation (\ref{eq2}), we have
\begin{eqnarray*}
\bigg(\frac{d^r}{d\omega^r}K_n^a(f(t);\omega)\bigg)_{\omega=x}&=&\sum_{\nu=0}^{r+2}
\frac{f^{(\nu)}(x)}{\nu!}\bigg(\frac{d^r}{dw^{r}}(K_n^a((t-x)^{\nu};\omega)\bigg)
_{\omega=x}+\bigg(\frac{d^r}{d\omega^r}K_n^a(\psi(t,x)(t-x)^{r+2};\omega)
\bigg)_{\omega=x}\\&=& \sum_{\nu=0}^{r+2}\frac{f^{(\nu)}(x)}{\nu!}
\sum_{j=0}^{\nu}{\nu\choose j}(-x)^{\nu-j}\bigg(\frac{d^r}{d\omega^r}K_n^a(t^j;\omega)
\bigg)_{\omega=x}+\bigg(\frac{d^r}{d\omega^r}K_n^a(\psi(t,x))(t-x)^{r+2};\omega\bigg)
_{\omega=x}\\&:=& I_1+I_2, \,\,  \mbox{say}.
\end{eqnarray*}
Proceeding in a manner to the estimates of $I_2$ in Theorem \ref{th5},
for each $x\in (0,\infty)$ we get
\begin{eqnarray*}
\displaystyle\lim_{n\rightarrow \infty}n\bigg(\frac{d^r}{d\omega^r}(K_n^a
(\psi(t,x)(t-x)^{r+2};\omega)\bigg)_{\omega=x}=0.
\end{eqnarray*}
Now, we estimate $I_1.$
\begin{eqnarray*}
I_1 &=&\sum_{\nu=0}^{r-1}\frac{f^{(\nu)}(x)}{\nu!}\sum_{j=0}^{\nu}{\nu\choose j}
(-x)^{\nu-j}\bigg(\frac{d^r}{d\omega^r}K_n^a(t^j;\omega)\bigg)_{\omega=x}
+\frac{f^{(r)}(x)}{r!}\sum_{j=0}^{r}{r\choose j}(-x)^{r-j}\bigg(\frac{d^r}
{d\omega^r}K_n^a(t^j;\omega)\bigg)_{\omega=x}\\
&&+\frac{f^{(r+1)}(x)}{(r+1)!}\sum_{j=0}^{r+1}{r+1\choose j}(-x)^{r+1-j}\bigg
(\frac{d^r}{d\omega^r}K_n^a(t^j;\omega)\bigg)_{\omega=x}\\
&&+\frac{f^{(r+2)}(x)}{(r+2)!}\sum_{j=0}^{r+2}{r+2\choose j}(-x)^{r+2-j}\bigg
(\frac{d^r}{d\omega^r}K_n^a(t^j;\omega)\bigg)_{\omega=x}.
\end{eqnarray*}
By making use of Lemma \ref{lm2}, we have\\
$I_1=f^{(r)}(x)+n^{-1}\bigg(\displaystyle\sum_{\nu=1}^{r+2}Q(\nu,r,a,x)f^{(\nu)}
(x)+o(1)\bigg).$\\
Thus, from the estimates of $I_1$ and $I_2,$ the required result follows. This
completes the proof.

\begin{corollary}\label{c1}
From the above theorem, we have
\begin{enumerate}[(i)]
\item for $r=0\\
\displaystyle\lim_{n\rightarrow \infty}n(K_n^a(f;x)-f(x))= \bigg(\frac{ax}{1+x}
+\frac{1}{2}-x\bigg)f'(x)+\frac{1}{2}(x+x^2)f''(x);$\\
\item for $r=1\\
\displaystyle\lim_{n\rightarrow
\infty}n\bigg(\bigg(\frac{d}{d\omega}K_n^a(f;\omega)-f^{\prime}(x)\bigg)_{\omega=x}\bigg)
=\bigg(-1+\frac{a}{(1+x)^2}\bigg)f'(x)+\bigg(1+\frac{ax}{1+x}\bigg)f''(x)
+\frac{1}{2}x(1+x)f'''(x).$
\end{enumerate}
\end{corollary}
\end{proof}

\subsection{Degree of approximation}
In this section, we obtain an estimate of the degree of approximation for
$r$th order derivative of $K_n^a$ for smooth functions.
\begin{thm}
Let $r\leq q \leq r+2, f\in C_{\gamma}[0,\infty)$ and $f^{(q)}$ exist and
be continuous on $(a-\eta, b+\eta), \eta>0$. Then, for sufficiently large $n$
\begin{eqnarray*}
\bigg\|\bigg(\frac{d^r}{d\omega^r} K_n^a(f;\omega)\bigg)_{\omega=t}-f^{(r)}(t)
\bigg\|_{C[a,b]}\leq  C_1 n^{-(q-r)/2}\omega(f^{(q)},n^{-1/2})+ C_2\,\,n^{-1},
\end{eqnarray*}
where $C_1=C_1(r)$ and $C_2=C_2(r,f).$
\end{thm}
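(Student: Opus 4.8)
The plan is to localize via Taylor's theorem and then differentiate the operator $r$ times by means of the basis--derivative formula of Lemma \ref{lm3}, following the scheme of Theorems \ref{th5} and \ref{thm6} but retaining quantitative control of the remainder through $\omega(f^{(q)},\cdot)$. Fix $x\in[a,b]\subset(0,\infty)$ and a number $0<\delta<\eta$. Because $f^{(q)}$ is continuous on $(a-\eta,b+\eta)$, for $|t-x|<\delta$ I write
$$f(t)=\sum_{\nu=0}^{q}\frac{f^{(\nu)}(x)}{\nu!}(t-x)^\nu+R(t,x),\qquad |R(t,x)|\le\frac{|t-x|^q}{q!}\,\omega\!\left(f^{(q)},|t-x|\right),$$
whereas for $|t-x|\ge\delta$ the membership $f\in C_\gamma[0,\infty)$ yields a crude bound $|R(t,x)|\le M(1+t^s)$ with $M,s$ independent of $x\in[a,b]$. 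Applying $\frac{d^r}{d\omega^r}K_n^a(\cdot;\omega)$ and putting $\omega=x$ splits the quantity to be estimated into a polynomial part $E_1$ and a remainder part $E_2$.

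For $E_1$ I use $K_n^a((t-x)^\nu;\omega)=\sum_{j=0}^{\nu}\binom{\nu}{j}(-x)^{\nu-j}T_{n,j}^a(\omega)$ together with Lemma \ref{lm2}, which gives $T_{n,j}^a(\omega)=\omega^j+n^{-1}S_{n,j}(\omega)$, where $S_{n,j}$ and its first $r$ derivatives are uniformly bounded on $[a,b]$ for large $n$ (this is read off from the explicit representation in Lemma \ref{lm2} and the recurrence of Lemma \ref{lm4}, not from naive differentiation of the $o(1)$ term). Summing, the leading part of $K_n^a((t-x)^\nu;\omega)$ is exactly $(\omega-x)^\nu$, and since $\frac{d^r}{d\omega^r}(\omega-x)^\nu\big|_{\omega=x}$ equals $r!$ for $\nu=r$ and vanishes otherwise, the principal contribution collapses to $f^{(r)}(x)$. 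Differentiating the $n^{-1}S_{n,j}$ corrections $r$ times and evaluating at $x$ produces terms $O(n^{-1})$ with constant controlled by $\|f^{(\nu)}\|_{C[a,b]}$, $\nu\le q$; hence $E_1=f^{(r)}(x)+O(n^{-1})$, which furnishes the $C_2\,n^{-1}$ summand with $C_2=C_2(r,f)$.

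The core of the argument is $E_2$. By Lemma \ref{lm3} I replace $\frac{d^r}{d\omega^r}W_{n,k}^a(x)$ and bound
$$|E_2|\le (n+1)\sum_{k=0}^{\infty}W_{n,k}^a(x)\sum_{\substack{2i+j\le r\\ i,j\ge 0}} n^i|k-nx|^j\frac{|q_{i,j,r}(x)|}{(p(x))^r}\int_{\frac{k}{n+1}}^{\frac{k+1}{n+1}}|R(t,x)|\,dt,$$
splitting the inner integral at $|t-x|=\delta$. On the near region I insert $\omega(f^{(q)},|t-x|)\le\left(1+n^{1/2}|t-x|\right)\omega(f^{(q)},n^{-1/2})$ from Lemma \ref{l6}(iv), so that matters reduce to the moments $K_n^a(|t-x|^q;x)$ and $n^{1/2}K_n^a(|t-x|^{q+1};x)$. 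These I treat by the Cauchy--Schwarz splitting already used for $I_7$ in Theorem \ref{th5}: the factor $n^i(\sum_k(k-nx)^{2j}W_{n,k}^a(x))^{1/2}=O(n^{i+j/2})=O(n^{r/2})$ (because $2i+j\le r$) multiplied by $(u_{n,2q}^a(x))^{1/2}=O(n^{-q/2})$ from Lemma \ref{lm1} gives $O(n^{(r-q)/2})$, and the $n^{1/2}$--weighted term gives the same order via $(u_{n,2q+2}^a(x))^{1/2}=O(n^{-(q+1)/2})$. This yields precisely the summand $C_1\,n^{-(q-r)/2}\omega(f^{(q)},n^{-1/2})$. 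On the far region I use $|R(t,x)|\le M(1+t^s)$ and multiply the integrand by $(|t-x|/\delta)^{2\ell}$ with $\ell$ large; the attendant high central moments are $O(n^{-\ell})$ by Lemma \ref{lm1}, making this contribution $O(n^{-1})$, which is absorbed into $C_2\,n^{-1}$.

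I expect the main obstacle to be the remainder step $E_2$: one must carry out the localization on the far region while keeping every constant --- most delicately $|q_{i,j,r}(x)|/(p(x))^r$ and the implied moment constants --- uniformly bounded over $x\in[a,b]\subset(0,\infty)$, so that passing to $\|\cdot\|_{C[a,b]}$ preserves the stated form. A secondary point requiring care is the uniform differentiability of the moment corrections $S_{n,j}$ in $E_1$, which forces one to argue through the explicit recurrences of Lemmas \ref{lm4} and \ref{lm2} rather than differentiating the asymptotic expansion term by term.
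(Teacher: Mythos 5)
Your proposal is correct and follows essentially the same route as the paper: a localized Taylor expansion whose polynomial part is reduced to $f^{(r)}(x)+O(n^{-1})$ via the moment expansions of Lemma \ref{lm2}, and whose remainder is estimated through the basis-derivative formula of Lemma \ref{lm3}, the Cauchy--Schwarz splitting with Corollary \ref{c2} and Lemma \ref{lm1}, and the choice $\delta=n^{-1/2}$. The only differences are bookkeeping ones (you split the remainder at $|t-x|=\delta$ rather than via the characteristic function of $(a-\eta,b+\eta)$ as the paper does, and you are somewhat more explicit than the paper about why the $n^{-1}$ moment corrections may be differentiated $r$ times), which do not change the substance.
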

\begin{proof}
By our hypothesis we have
\begin{eqnarray}\label{p2}
f(t)=\sum_{i=0}^{q}\frac{f^{(i)}(x)}{i!}(t-x)^i +\frac{f^{(q)}(\xi)-f^{(q)}(x)}
{q!}(t-x)^{q} \chi(t)+\phi(t,x)(1-\chi(t)),
\end{eqnarray}
where $\xi$ lies between $t$ and $x$ and $\chi(t)$ is the characterstic function
of $(a-\eta, b+\eta).$
The function $\phi(t,x)$ for $t\in[0,\infty)\setminus (a-\eta, b+\eta)$ and $x\in [a,b]$ is bounded by $M|t-x|^{\kappa}$ for some
constants $M, \kappa >0.$\\
Operating $\dfrac{d^r}{d\omega^r} K_n^a(.;\omega)$ on the equality (\ref{p2})
and breaking the right hand side into three parts $J_1, J_2$ and $J_3,$ say,
corresponding to the three terms on the right hand side of equation (\ref{p2})
as in the estimate of $I_8$ in Theorem \ref{th5},
it can be easily shown that $J_3=o(n^{-1}),$ uniformly in $x\in
[a,b].$\\ Now treating $J_1$ in a manner similar to the treatment of $I_1$ of Theorem
\ref{thm6}, we get $J_1=f^{(r)}(t)+O(n^{-1}),$ uniformly in $t\in [a,b].$\\
Finally, let
\begin{eqnarray*}
S_1=\displaystyle\sup_{x\in[a,b]}\sup_{\substack {2i+j\leq r\\ i,j\geq 0}}
\frac{q_{i,j,r}(x)}{(p(x))^r},
\end{eqnarray*}
then making use of the inequality
\begin{eqnarray*}
|f^{(q)}(\xi)-f^{(q)}(x)|\leq \bigg(1+\frac{|t-x|}{\delta}\bigg)\omega (f^{(q)},
\delta),\, \delta>0,
\end{eqnarray*}
the Schwarz inequality, Corollary \ref{c2} and Lemma \ref{lm1}, we obtain
\begin{eqnarray*}
|J_2|&\leq& (n+1)\sum_{k=0}^{\infty}\sum_{\substack {2i+j\leq r\\ i,j\geq 0}}
\frac{n^i|k-nx|^j q_{i,j,r}(x)}{(p(x))^r}W_{n,k}^a(x)\int_{\frac{k}{n+1}}
^{\frac{k+1}{n+1}}\frac{|f^{(q)}(\xi)-f^{(q)}(x)|}{q!}|t-x|^q \chi(t) dt\\
&\leq& \frac{\omega (f^{(q)},\delta) S_1}{q!} \sum_{\substack {2i+j\leq r\\ i,j\geq 0}}
 n^i\bigg(\sum_{k=0}^{\infty}(k-nx)^{2j}W_{n,k}^a(x)\bigg)^{1/2}\bigg\{\bigg
((n+1)\sum_{k=0}^{\infty}W_{n,k}^a(x)\int_{\frac{k}{n+1}}^{\frac{k+1}{n+1}}
(t-x)^{2q} dt\bigg)^{1/2}\\&&+\frac{1}{\delta}\bigg((n+1)\sum_{k=0}^{\infty}
W_{n,k}^a(x)\int_{\frac{k}{n+1}}^{\frac{k+1}{n+1}}(t-x)^{2q+2} dt\bigg)^{1/2}
\bigg\}\\&\leq& C_1\bigg(n^{-(q-r)/2}\bigg)\omega(f^{(q)},n^{-1/2}),\,\,
 \mbox{on choosing}\,\,\delta=n^{-1/2}.
\end{eqnarray*}
By combining the estimates of $J_1-J_3,$ we get the required result.
\end{proof}

\subsection{Statistical convergence}
Let $A=(a_{nk})$ be a non-negative infinite summability matrix. For a given
sequence $x:=(x)_{n},$ the A-transform of x denoted by $Ax:(Ax)_{n}$ is
defined as\newline
\begin{equation*}
(Ax)_{n}=\displaystyle\sum_{k=1}^{\infty }a_{nk}x_{k}
\end{equation*}%
provided the series converges for each n. A is said to be regular if $%
\displaystyle\lim_{n}(Ax)_{n}=L$ whenever $\displaystyle\lim_{n}(x)_{n}=L.$
Then $x=(x)_{n}$ is said to be A-statistically convergent to L i.e. $st_{A}-%
\displaystyle\lim_{n}(x)_{n}=L$ if for every\newline
$\epsilon >0,\,\,\displaystyle\lim_{n}\sum_{k:|x_{k}-L|\geq \epsilon
}a_{nk}=0.$ If we replace $A$ by $C_{1}$ then $A$ is a Cesaro matrix of
order one and $A$-statistical convergence is reduced to the statistical
convergence. Similarly, if $A=I,$ the identity matrix then $A$-statistical
convergence is called ordinary convergence.

\begin{thm}
Let $(a_{nk})$ be a non-negative regular summability matrix and
$x\in[0,\infty).$ Then, for all  $f\in C_{\rho}[0,\infty)$ we have
\begin{eqnarray*}
st_{A}-\lim_{n}\|K_{n}^a(f,.)-f\|_{\rho_{\alpha}}=0,
\end{eqnarray*}
where $\rho_{\alpha}(x)=1+x^{2+\alpha},$ $\alpha>0.$
\end{thm}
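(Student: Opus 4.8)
The plan is to reduce the weighted $A$-statistical convergence to the verification of three test-function conditions, exactly mirroring the classical (non-statistical) Korovkin-type argument used in Theorem \ref{th4}, but now in the $A$-statistical setting. Specifically, I would invoke the weighted $A$-statistical Korovkin theorem (the statistical analogue of the result of \cite{AD} used earlier): it suffices to show that
\begin{eqnarray*}
st_A-\lim_n \|K_n^a(t^k;\cdot)-x^k\|_{\rho_\alpha}=0,\qquad k=0,1,2.
\end{eqnarray*}
Here the weight in the norm is $\rho_\alpha(x)=1+x^{2+\alpha}$, which is strictly heavier than the natural weight $\rho(x)=1+x^2$ attached to the test functions $x^k$, $k\le 2$; this mismatch is precisely what makes the statistical Korovkin machinery applicable, since it guarantees the suprema are finite and forces the error to vanish.

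First I would dispose of $k=0$ immediately, since $K_n^a(1;x)=1$ gives $\|K_n^a(1;\cdot)-1\|_{\rho_\alpha}=0$ for every $n$. For $k=1$ and $k=2$, I would use the explicit expressions for $T_{n,1}^a(x)$ and $T_{n,2}^a(x)$ from Lemma \ref{lm2}. The key observation is that, just as in the proof of Theorem \ref{th4}, the differences $K_n^a(t;x)-x$ and $K_n^a(t^2;x)-x^2$ are each a sum of rational-in-$x$ terms, each multiplied by an explicit negative power of $(n+1)$. Taking the $\rho_\alpha$-norm and bounding each supremum of the form $\sup_x \frac{x^j}{(1+x)^\ell(1+x^{2+\alpha})}$ by a finite constant (finiteness being ensured because the denominator now carries the exponent $2+\alpha>2$), I would obtain
\begin{eqnarray*}
\|K_n^a(t^k;\cdot)-x^k\|_{\rho_\alpha}\le \frac{C_k(a,\alpha)}{n+1},\qquad k=1,2,
\end{eqnarray*}
for constants $C_k(a,\alpha)$ independent of $n$.

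Once these ordinary $O(1/(n+1))$ bounds are in hand, the statistical conclusion follows from a standard fact: if a non-negative sequence $b_n\to 0$ in the ordinary sense, then $st_A-\lim_n b_n=0$ for every non-negative regular matrix $A$ (regularity of $A$ transfers ordinary limits to $A$-statistical limits). Applying this with $b_n=\|K_n^a(t^k;\cdot)-x^k\|_{\rho_\alpha}$ for $k=1,2$ verifies the remaining two Korovkin conditions $A$-statistically, and the weighted statistical Korovkin theorem then yields $st_A-\lim_n\|K_n^a(f,\cdot)-f\|_{\rho_\alpha}=0$ for all $f\in C_\rho[0,\infty)$.

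The main obstacle I anticipate is not analytic but bookkeeping: correctly citing and applying the appropriate weighted $A$-statistical Korovkin theorem with the two distinct weights $\rho$ and $\rho_\alpha$, and checking that every rational supremum $\sup_{x\in[0,\infty)}\frac{x^j}{(1+x)^\ell(1+x^{2+\alpha})}$ is genuinely finite. The degree of the numerator never exceeds $2$ in the relevant terms (the leading $n^2x^2$ cancels against $(n+1)^2x^2$, exactly as in Theorem \ref{th4}), so the heavier denominator exponent $2+\alpha$ secures boundedness; the only care needed is to confirm that after the cancellation of the top-order term every surviving contribution indeed decays like $1/(n+1)$ and carries a denominator of weight at least $1+x^2$, so that dividing by the strictly larger weight $\rho_\alpha$ keeps all constants finite.
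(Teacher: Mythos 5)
Your overall strategy coincides with the paper's: reduce to Korovkin-type test conditions via a weighted statistical Korovkin theorem, verify those conditions with explicit $O(1/(n+1))$ bounds coming from Lemma \ref{lm2}, and pass from ordinary to $A$-statistical convergence (your ``standard fact'' that ordinary convergence implies $A$-statistical convergence for non-negative regular $A$ is exactly what the paper's set inclusions $D\subseteq D_1$ and $E\subseteq E_1\cup E_2$ accomplish). However, there is a gap in your reduction step: the theorem the paper invokes (\cite{ODC}, p.~191, Th.~3) requires the test conditions in the \emph{lighter} weight $\rho(x)=1+x^2$, namely $st_A-\lim_n\|K_n^a(e_i,\cdot)-e_i\|_{\rho}=0$ for $i=0,1,2$; the heavier weight $\rho_\alpha$ enters only in the conclusion. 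You instead propose to verify the conditions in the $\rho_\alpha$-norm. Since $\rho(x)\le 2\rho_\alpha(x)$ on $[0,\infty)$, the $\rho_\alpha$-norm conditions are strictly weaker than the $\rho$-norm conditions, so the cited theorem does not apply in the form you state it; you would either have to prove your strengthened version of the Korovkin statement from scratch (it is in fact true, but requires a compact-part-plus-tail argument you do not supply), or simply check the conditions in the $\rho$-norm.

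Related to this is a conceptual misreading: you say the heavier exponent $2+\alpha$ is ``precisely what makes the statistical Korovkin machinery applicable, since it guarantees the suprema are finite.'' That is not its role. After cancellation of the leading terms, every numerator appearing in $K_n^a(e_i;x)-x^i$ has degree at most $2$, so all the relevant suprema, e.g.\ $\sup_x x^2/(1+x^2)$ and $\sup_x x^2/\bigl((1+x)(1+x^2)\bigr)$, are already finite against the weight $1+x^2$; this is exactly the computation the paper carries out here and in Theorem \ref{th4}. The heavier weight in the conclusion norm is needed for a different reason: on an unbounded interval, a same-weight Korovkin conclusion is not available for the whole weighted class, and passing to $\rho_\alpha$ in the conclusion is what makes the statistical theorem applicable to every $f\in C_{\rho}[0,\infty)$. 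The good news is that the repair is immediate: run your own estimates with $1+x^2$ in place of $1+x^{2+\alpha}$ (they go through verbatim, since the numerator degrees never exceed $2$), after which the cited Korovkin theorem together with your ordinary-implies-statistical observation completes the proof exactly as in the paper.
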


\begin{proof}
From (\cite{ODC}, p. 191, Th. 3), it is sufficient to show that
$st_{A}-\lim_{n}\|K_{n}^a(e_i,.)-e_i\|_{\rho}=0,$
where $e_i(x)=x^{i},$ $i=0,1,2.$\\

In view of Lemma \ref{lm2}, it follows that
\begin{eqnarray}
st_{A}-\lim_{n}\|K_{n}^a(e_0,.)-e_0\|_{\rho}=0.
\end{eqnarray}
Again, by using Lemma \ref{lm2}, we have
\begin{eqnarray*}
\sup_{x\in[0,\infty)}\frac{|K_{n}^a(e_1,x)-e_1(x)|}{1+x^2}&=&\displaystyle
\sup_{x\in[0,\infty)}\frac{\bigg|\dfrac{1}{n+1}\bigg(-x+\dfrac{ax}{1+x}
+\dfrac{1}{2}\bigg)\bigg|}{1+x^2}\\&\leq& \frac{1}{n+1}\bigg(a+\frac{3}{2}\bigg).
\end{eqnarray*}
For $\epsilon>0,$ we define the following sets
\begin{eqnarray*}
D:&=&\bigg\{n:\|K_{n}^a(e_1,.)-e_1\|_{\rho}\geq \epsilon\bigg\}\\
D_1:&=&\bigg\{n:\frac{1}{n+1}\bigg(a+\frac{3}{2}\bigg)\geq\epsilon\bigg\}.
\end{eqnarray*}
which yields us $D\subseteq D_{1}$ and therefore for all $n,$ we have
$\displaystyle\sum_{k\in D}a_{nk}\leq \sum_{k\in D_1}a_{nk}$ and hence
\begin{eqnarray}
st_{A}-\lim_{n}\|K_{n}^a(e_1,.)-e_1\|_{\rho}=0.
\end{eqnarray}
Proceeding similarly,
\begin{eqnarray*}
\parallel K_{n}^a(e_{2};.)-e_{2}\parallel _{\rho} &=&
\sup_{x\in[0,\infty)}\frac{1}{1+x^2}\bigg| \frac{n}{(n+1)^2}
\bigg(-x^2+2x+\frac{2ax^2}{1+x}\bigg)+\frac{1}{(n+1)^2}\bigg(\frac{a^2x^2}
{(1+x)^2}+\frac{2ax}{1+x}-x^2+\frac{1}{3}\bigg)\bigg|\\
&\leq& \frac{1}{n+1}(2a+3)+\frac{1}{(n+1)^2}\bigg(a^2+4a+\frac{13}{3}\bigg).
\end{eqnarray*}
Let us define the following sets
\begin{eqnarray*}
E:&=&\bigg\{n:\|K_{n}^a(e_2,.)-e_2\|_{\rho}\geq \epsilon\bigg\}\\
E_1:&=&\bigg\{n:\frac{1}{n+1}(2a+3)\geq\epsilon/2\bigg\}\\
E_2:&=&\bigg\{n:\frac{1}{(n+1)^2}\bigg(a^2+4a+\frac{13}{3}
\bigg)\geq\epsilon/2\bigg\}.
\end{eqnarray*}
Then, we obtain $E\subseteq E_{1}\cup E_{2},$ which implies that
\begin{equation*}
\displaystyle\sum_{k\in E}a_{nk}\leq \displaystyle\sum_{k\in E_{1}}a_{nk}
+\displaystyle\sum_{k\in E_{2}}a_{nk}
\end{equation*}
and hence
\begin{eqnarray}
st_{A}-\lim_{n}\|K_{n}^a(e_2,.)-e_2\|_{\rho}=0.
\end{eqnarray}
This completes the proof of the theorem.
\end{proof}

\subsection{Rate of approximation}
The rate of convergence for functions with derivative of bounded variation is an
interesting area of research in approximation theory. A pioneering work in this
direction is due to Bojanic and Cheng (\cite{RB1}, \cite{RB2})
who estimated the rate of convergence with derivatives of bounded variation for
Bernstein and Hermite-Fejer polynomials by using different methods. After that many
researchers have obtained results in this direction for different sequences of
linear positive operators.\\
Now, we shall estimate the rate of convergence for the generalized Baskakov
Kantorovich Operators $K_n^a$ for functions with derivatives of bounded variation
defined on $(0,\infty)$ at points $x$ where $f^{\prime}(x+)$ and $f^{\prime}(x-)$
exist, we shall prove that the operators (\ref{e1}) converge to the limit $f(x).$\\

Let $f\in DBV_{\gamma}(0,\infty),$ $\gamma\geq 0$ be the class of all functions
defined on $(0,\infty),$ having a derivative of bounded variation on every finite
subinterval of $(0,\infty)$ and $|f(t)|\leq M t^{\gamma},$ $\forall\,\,\ t>0.$\\
We notice that the functions $f\in DBV_{\gamma}(0,\infty)$ possess a representation
\begin{eqnarray*}
f(x)=\int_{0}^{x}h(t)dt+f(0),
\end{eqnarray*}
where $h(t)$ is a function of bounded variation on each finite subinterval of
$(0,\infty).$

The operators $K_n^a(f;x)$ also admit the integral representation
\begin{eqnarray}\label{pa}
K_n^a(f;x)=\int_{0}^{\infty}J_n^a(x,t) f(t)dt,
\end{eqnarray}
where $J_n^a(x,t):= (n+1)\displaystyle\sum_{k=0}^{\infty}W_{n,k}^a(x)\chi_{n,k}(t),$
where $\chi_{n,k}(t)$ is the characteristic function of the interval
$\bigg[\frac{k}{n+1}, \frac{k+1}{n+1}\bigg]$ with respect to $[0,\infty).$

\begin{remark}\label{r1}
From Lemma \ref{lm1}, for $\lambda>1, x\in(0,\infty)$ and $n$ sufficiently
large, we have
\begin{eqnarray*}
K_n^a((t-x)^2;x)=u_{n,2}^a(x)\leq \frac{\lambda x(1+x)}{n+1}.
\end{eqnarray*}
\end{remark}

In order to prove the main result, we need the following Lemma.
\begin{lemma}\label{lm5}
For fixed $x\in (0,\infty), \lambda >1$ and $n$ sufficiently large, we have
\begin{enumerate}[(i)]
\item $\alpha_n^a(x,y)=\int_{0}^y J_n^a(x,t)dt\leq \dfrac{1}{(x-y)^2}\dfrac
{\lambda x(1+x)}{n+1},\,\, 0\leq y < x,$\\
\item $1-\alpha_n^a(x,z)= \int_{z}^{\infty} J_n^a(x,t)dt\leq\dfrac{1}{(z-x)^2}
\dfrac{\lambda x(1+x)}{n+1},\,\, x < z < \infty.$
\end{enumerate}
\end{lemma}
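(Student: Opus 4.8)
The plan is to prove both estimates by the standard device of bounding the characteristic-function integral by a second central moment, then invoking Remark \ref{r1}. The key observation is that $\alpha_n^a(x,y)$ and $1-\alpha_n^a(x,z)$ are probabilities accumulated away from $x$, so multiplying by the factor $\left(\frac{t-x}{x-y}\right)^2\geq 1$ (respectively $\left(\frac{t-x}{z-x}\right)^2\geq 1$) on the relevant range can only increase the integral, after which the range may be extended to all of $[0,\infty)$ to produce the full second moment $K_n^a((t-x)^2;x)=u_{n,2}^a(x)$.

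For part (i), I would argue that since $0\leq t\leq y<x$, we have $x-t\geq x-y>0$, hence $\frac{(x-t)^2}{(x-y)^2}\geq 1$ throughout the interval of integration. Therefore
\begin{eqnarray*}
\alpha_n^a(x,y)=\int_0^y J_n^a(x,t)\,dt
\leq \int_0^y \frac{(x-t)^2}{(x-y)^2}J_n^a(x,t)\,dt
\leq \frac{1}{(x-y)^2}\int_0^\infty (t-x)^2 J_n^a(x,t)\,dt.
\end{eqnarray*}
By the integral representation (\ref{pa}), the last integral equals $K_n^a((t-x)^2;x)=u_{n,2}^a(x)$, and then Remark \ref{r1} yields the bound $\frac{1}{(x-y)^2}\cdot\frac{\lambda x(1+x)}{n+1}$, as claimed.

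For part (ii), I would proceed symmetrically: for $t\geq z>x$ we have $t-x\geq z-x>0$, so $\frac{(t-x)^2}{(z-x)^2}\geq 1$ on the range of integration, giving
\begin{eqnarray*}
1-\alpha_n^a(x,z)=\int_z^\infty J_n^a(x,t)\,dt
\leq \frac{1}{(z-x)^2}\int_z^\infty (t-x)^2 J_n^a(x,t)\,dt
\leq \frac{1}{(z-x)^2}u_{n,2}^a(x),
\end{eqnarray*}
and Remark \ref{r1} again supplies the desired upper bound.

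There is no serious obstacle here; the argument is a routine two-sided application of the Chebyshev-type inequality tailored to the kernel $J_n^a$. The only point requiring mild care is the use of the identity $\int_0^\infty (t-x)^2 J_n^a(x,t)\,dt=u_{n,2}^a(x)$, which is exactly the integral representation (\ref{pa}) applied to the function $(t-x)^2$, together with the definition of the central moment $u_{n,r}^a$ from Lemma \ref{lm1}; one should also note that extending the integration range from $[0,y]$ (resp. $[z,\infty)$) to all of $[0,\infty)$ is legitimate because the integrand $(t-x)^2 J_n^a(x,t)$ is nonnegative.
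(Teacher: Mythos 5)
Your proposal is correct and follows exactly the paper's own argument: the Chebyshev-type bound obtained by inserting the factor $\left(\frac{x-t}{x-y}\right)^2\geq 1$ (resp. $\left(\frac{t-x}{z-x}\right)^2\geq 1$), extending the range of integration to recover $K_n^a((t-x)^2;x)=u_{n,2}^a(x)$, and then applying Remark \ref{r1}. The paper states part (ii) is ``similar''; you simply wrote it out, so there is nothing to add.
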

\begin{proof}
First we prove $(i).$
\begin{eqnarray*}
\alpha_n^a(x,y)&=&\int_{0}^y J_n^a(x,t)dt\leq \int_{0}^y\bigg(\frac{x-t}
{x-y}\bigg)^2 J_n^a(x,t)dt\\&\leq& \frac{1}{(x-y)^2}K_n^a((t-x)^2;x)\\
&\leq&\dfrac{1}{(x-y)^2}\dfrac{\lambda x(1+x)}{n+1}.
\end{eqnarray*}
The proof of $(ii)$ is similar.
\end{proof}

\begin{thm}
Let $f\in DBV_{\gamma}(0,\infty).$ Then, for every $x\in (0,\infty),$ and $n$
sufficiently large, we have
\begin{eqnarray*}
|K_n^a(f;x)-f(x)| &\leq& \dfrac{\bigg|-x+\dfrac{ax}{1+x}+\dfrac{1}{2}
\bigg|}{n+1}\dfrac{|f^{\prime}(x+)+f^{\prime}(x-)|}{2}+\sqrt{\frac{\lambda x(1+x)}
{n+1}}\dfrac{|f^{\prime}(x+)-f^{\prime}(x-)|}{2}\\&&+\dfrac{\lambda (1+x)}
{n+1}\sum_{k=1}^{[\sqrt{n}]}\bigvee_{x-(x/k)}^{x}(f_x^{\prime})+\frac{x}{\sqrt{n}}
\bigvee_{x-(x/\sqrt{n})}^{x}(f_x^{\prime})\\&&+\dfrac{\lambda (1+x)}{n+1}
\sum_{k=0}^{[\sqrt{n}]}\bigvee_{x}^{x+x/k}(f_x^{\prime})+\frac{x}{\sqrt{n}}
\bigvee_x^{x+x/\sqrt{n}}(f_x^{\prime}),
\end{eqnarray*}
where $\bigvee_c^d f(x)$ denotes the total variation of $f(x)$ on $[c,d]$ and $f_x$
is defined by
\begin{eqnarray*}
f_x(t)=\left\{
\begin{array}{cc}
f(t)-f(x-), &0\leq t<x\\
0 \,\,\,\,, & t=x\\
f(t)-f(x+), & x<t<\infty.
\end{array}
\right.
\end{eqnarray*}
\end{thm}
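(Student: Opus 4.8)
The standard route for a result of this kind is to exploit the integral representation (\ref{pa}) of $K_n^a$, decompose the target function $f$ via its derivative, and then split the domain of integration into a neighbourhood of $x$ and its complement, controlling each region by the total variation of $f_x'$ together with the moment estimate from Remark \ref{r1}. First I would write, using the representation $f(t)=\int_0^t f'(u)\,du+f(0)$ and the exactness $K_n^a(1;x)=1$, an expression for $K_n^a(f;x)-f(x)$ purely in terms of $f'$. Because $f'$ has a jump at $x$ with one-sided limits $f'(x+),f'(x-)$, I would decompose $f'$ on $(0,\infty)$ into the canonical pieces: the auxiliary function $f_x'$ (which is continuous at $x$ and carries the variation), the symmetric average $\tfrac{1}{2}(f'(x+)+f'(x-))$, the jump $\tfrac12(f'(x+)-f'(x-))$ multiplied by $\operatorname{sgn}(t-x)$, and correction terms. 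Applying $K_n^a$ term by term, the average-term produces $K_n^a(t-x;x)=u_{n,1}^a(x)=\tfrac{1}{n+1}\bigl(-x+\tfrac{ax}{1+x}+\tfrac12\bigr)$, which is exactly the first summand in the claimed bound, and the jump-term produces $K_n^a(|t-x|;x)\le\sqrt{u_{n,2}^a(x)}\le\sqrt{\lambda x(1+x)/(n+1)}$ by Cauchy--Schwarz and Remark \ref{r1}, giving the second summand.

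The core of the argument is the estimation of the remaining term, namely $\int_0^\infty J_n^a(x,t)\Bigl(\int_x^t f_x'(u)\,du\Bigr)dt$ (up to the decomposition bookkeeping). The plan is to split $\int_0^\infty = \int_0^x + \int_x^\infty$ and to bound each half using integration by parts in the inner integral together with the kernel estimates of Lemma \ref{lm5}. On each half I would further partition the $t$-interval dyadically: for the left half, I would isolate the near-diagonal band $x-x/\sqrt{n}<t<x$ (estimated directly by $\tfrac{x}{\sqrt n}\bigvee_{x-x/\sqrt n}^{x}(f_x')$, the fourth summand) and decompose the far region $0<t<x-x/\sqrt n$ into the intervals $(x-x/k,\,x-x/(k+1))$ for $k=1,\dots,[\sqrt n]$. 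On each such sub-interval the total variation $\bigvee_{x-x/k}^{x}(f_x')$ is the natural majorant of $|f_x'|$, and the integrated kernel $\alpha_n^a(x,y)$ is controlled by Lemma \ref{lm5}(i) as $\tfrac{1}{(x-y)^2}\tfrac{\lambda x(1+x)}{n+1}$; summing over $k$ and using $(x-y)^{-2}\sim (k/x)^2$ on the $k$th block produces the factor $\tfrac{\lambda(1+x)}{n+1}\sum_{k=1}^{[\sqrt n]}\bigvee_{x-x/k}^{x}(f_x')$, the third summand. The right half is treated symmetrically using Lemma \ref{lm5}(ii) and yields the last two summands.

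The delicate bookkeeping step, and the one I expect to be the main obstacle, is the integration-by-parts estimate on the far blocks $(x-x/k,\,x-x/(k+1))$. After writing $\int_{x-x/k}^{x-x/(k+1)} f_x'(u)\,du$ and integrating against $J_n^a(x,t)$, one integrates by parts in $t$ so that the derivative falls on $\alpha_n^a(x,\cdot)$; the boundary terms and the remaining Riemann--Stieltjes integral against $d_t\alpha_n^a(x,t)$ must then each be bounded by $\bigvee_{x-x/k}^{x}(f_x')$ times the Lemma \ref{lm5} factor. Getting the powers of $k$ to align so that $\sum_k k^2 \cdot \tfrac{1}{(x-y)^2}$ telescopes into the stated single factor of $\tfrac{\lambda(1+x)}{n+1}$ (rather than an extra harmonic or logarithmic loss) requires care in choosing the representative point $y$ on each block and in comparing $(x-t)^{-2}$ with $k^2/x^2$. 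Once these block estimates are assembled and summed, combining them with the average-term and jump-term contributions yields precisely the displayed inequality, completing the proof.
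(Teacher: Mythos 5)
Your plan reproduces the paper's proof essentially step for step: the same decomposition of $f'$ into $f_x'$, the average $\tfrac12(f'(x+)+f'(x-))$, the signed jump term and the $\delta_x$ correction; the same treatment of the first two summands via $u_{n,1}^a(x)$ and Cauchy--Schwarz with Remark \ref{r1}; and the same split into the two tails $U_n$ and $V_n$, each handled by Riemann--Stieltjes integration by parts, Lemma \ref{lm5}, and a cut at $x\pm x/\sqrt{n}$. The only cosmetic difference is that the paper replaces your block decomposition $(x-x/k,\,x-x/(k+1))$ by the substitution $u=x/(x-t)$ (resp.\ $v=x/(t-x)$), which is the same partition in different variables, and the bookkeeping obstacle you anticipate does not arise: since $\int_{x-x/k}^{x-x/(k+1)}(x-t)^{-2}\,dt=1/x$ exactly, bounding the variation by its value at the left endpoint of each block yields the stated sum with no harmonic, logarithmic, or constant-factor loss.
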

\begin{proof}
For $u\in [0,\infty),$ we may write
\begin{eqnarray}\label{p3}
f^{\prime}(u)&=& f_x^{\prime}(u)+\frac{1}{2}(f^{\prime}(x+)+f^{\prime}(x-))
+\frac{1}{2}(f^{\prime}(x+)-f^{\prime}(x-))sgn(u-x)\nonumber\\&&+\delta_x(u)
\{f^{\prime}(u)-\frac{1}{2}(f^{\prime}(x+)+f^{\prime}(x-))\},
\end{eqnarray}
where
\begin{eqnarray*}
\delta_x(u)=\left\{
\begin{array}{cc}
1 \,\,\,\,, & u=x\\
0 \,\,\,\,, & u\neq x.
\end{array}
\right.
\end{eqnarray*}
From (\ref{pa}) we get
\begin{eqnarray}\label{p4}
K_n^a(f;x)-f(x)&=&\int_0^{\infty}J_n^a(x,t)(f(t)-f(x))dt\nonumber\\
&=&\int_0^{\infty}J_n^a(x,t)\int_x^{t}f^{\prime}(u)du dt.
\end{eqnarray}
It is obvious that
\begin{eqnarray*}
\int_0^{\infty}\bigg(\int_{x}^t\bigg(f^{\prime}(u)-\frac{1}{2}(f^{\prime}(x+)
+f^{\prime}(x-))\bigg)\delta_x(u)du\bigg)J_n^a(x,t)dt=0.
\end{eqnarray*}
By (\ref{p4}), we have
\begin{eqnarray*}
\int_0^{\infty}\bigg(\int_{x}^t\frac{1}{2}(f^{\prime}(x+)+f^{\prime}(x-))du
\bigg)J_n^a(x,t)dt&=&\frac{1}{2}(f^{\prime}(x+)+f^{\prime}(x-))\int_0^{\infty}
(t-x)J_n^a(x,t)dt\\=&&\frac{1}{2}(f^{\prime}(x+)+f^{\prime}(x-))K_n^a((t-x);x)
\end{eqnarray*}
and by using Schwarz inequality, we obtain
\begin{eqnarray*}
\bigg|\int_0^{\infty}J_n^a(x,t)\bigg(\int_{x}^t\frac{1}{2}(f^{\prime}(x+)-f^{\prime}(x-))
sgn(u-x)du\bigg)dt\bigg|&\leq& \frac{1}{2} |f^{\prime}(x+)-f^{\prime}(x-)|\int_0^{\infty}
|t-x|J_n^a(x,t)dt\\&\leq&\frac{1}{2} |f^{\prime}(x+)-f^{\prime}(x-) |K_n^a(|t-x|;x)\\
&\leq& \frac{1}{2} |f^{\prime}(x+)-f^{\prime}(x-)| (K_n^a((t-x)^2;x))^{1/2}.
\end{eqnarray*}
From Lemma \ref{lm2}, Remark \ref{r1} and from the above estimates (\ref{p4}) becomes
\begin{eqnarray}\label{p5}
|K_n^a(f;x)-f(x)|&\leq& \frac{1}{2(n+1)}|f^{\prime}(x+)+f^{\prime}(x-)|\bigg|-x
+\frac{ax}{1+x}+\frac{1}{2}\bigg|\nonumber\\&&+\frac{1}{2}|f^{\prime}(x+)-f^{\prime}(x-)|
\sqrt{\frac{\lambda x(1+x)}{n+1}}+\bigg|\int_0^x \bigg(\int_x^t f_x^{\prime}(u)du\bigg)
J_n^a(x,t)dt\nonumber\\&&+\int_x^{\infty}\bigg(\int_x^t f_x^{\prime}(u)du\bigg)J_n^a(x,t)dt\bigg|.
\end{eqnarray}
Let\\
$U_n(f^{\prime},x)=\displaystyle\int_0^x \bigg(\int_x^t f_x^{\prime}(u)du\bigg)
J_n^a(x,t)dt,$\\
and\\
$V_n(f^{\prime},x)= \displaystyle\int_x^{\infty}\bigg(\int_x^t f_x^{\prime}(u)du
\bigg)J_n^a(x,t)dt.$\\
Now, we estimate the terms $U_n(f^{\prime},x)$ and $V_n(f^{\prime},x).$
Since $\int_{c}^d d_t \alpha_n^a(x,t)\leq 1$ for all $[c,d]\subseteq (0,\infty),$\\
using integration by parts and Lemma \ref{lm5} with $y=x-\frac{x}{\sqrt{n}}$
we have
\begin{eqnarray*}
|U_n(f^{\prime},x)|&=&\bigg|\int_0^x \bigg(\int_x^t f_x^{\prime}(u)du\bigg)d_t
\alpha_n^a(x,t)dt\bigg|\\&=&\bigg|\int_0^x\alpha_n^a(x,t)f_x^{\prime}(t)dt\bigg|
\\&\leq& \bigg(\int_0^y+\int_y^x\bigg)|f_x^{\prime}(t)||\alpha_n^a(x,t)|dt\\
&\leq&\frac{\lambda x(1+x)}{n+1}\int_0^y \bigvee_t^x(f_x^{\prime})(x-t)^{-2}dt
+\int_y^x \bigvee_t^x(f_x^{\prime})dt\\ &\leq& \frac{\lambda x(1+x)}{n+1}
\int_0^y \bigvee_t^x(f_x^{\prime})(x-t)^{-2}dt+\frac{x}{\sqrt{n}}
\bigvee_{x-(x/\sqrt{n})}^x(f_x^{\prime}).
\end{eqnarray*}
By the substitution of $u=\frac{x}{x-t},$ we get
\begin{eqnarray*}
\frac{\lambda x(1+x)}{n+1}\int_0^{x-x/\sqrt{n}}(x-t)^{-2}\bigvee_t^x
(f_x^{\prime})dt&=&\frac{\lambda(1+x)}{n+1}\int_1^{\sqrt{n}}\bigvee_{x-(x/u)}^x
(f_x^{\prime})du\\&\leq& \frac{\lambda (1+x)}{n+1}\sum_{k=1}
^{[\sqrt{n}]}\int_k^{k+1}\bigvee_{x-(x/u)}^x (f_x^{\prime})du\\&\leq&
\frac{\lambda (1+x)}{n+1}\sum_{k=1}^{[\sqrt{n}]}\bigvee_{x-(x/k)}^x (f_x^{\prime}).
\end{eqnarray*}
Thus we obtain
\begin{eqnarray}\label{p6}
|U_n(f^{\prime},x)|\leq \frac{\lambda(1+x)}{n+1}\sum_{k=1}^{[\sqrt{n}]}
\bigvee_{x-(x/k)}^x(f_x^{\prime})+\frac{x}{\sqrt{n}}\bigvee_{x-(x/\sqrt{n})}
^x(f_x^{\prime}).
\end{eqnarray}
Also
\begin{eqnarray*}
|V_n(f^{\prime},x)|&=&\bigg|\int_x^{\infty}\bigg(\int_x^t f_x^{\prime}(u)du\bigg)
J_n^a(x,t)dt\bigg|\\&=&\bigg|\int_z^{\infty}\bigg(\int_x^tf_x^{\prime}(u)
du\bigg)d_t(1-\alpha_n^a(x,t))+\int_x^{z}\bigg(\int_x^t f_x^{\prime}(u)du\bigg)d_t
(1-\alpha_n^a(x,t))\bigg|\\ &=&\bigg|\int_x^z f_x^{\prime}(u)
(1-\alpha_n^a(x,z))du-\int_x^z f_x^{\prime}(t)(1-\alpha_n ^a(x,t))dt\\
&&+\bigg(\int_x^t f_x^{\prime}(u)(1-\alpha_n^a(x,t))du\bigg)_z^{\infty}
-\int_z^{\infty} f_x^{\prime}(t)(1-\alpha_n^a(x,t))dt\bigg|\\&\leq&\bigg|\int_x^z
f_x^{\prime}(t)(1-\alpha_n^a(x,t))dt\bigg|+\bigg|\int_z^{\infty}f_x^{\prime}(t)
(1-\alpha_n^a(x,t))dt\bigg|.
\end{eqnarray*}
By using Lemma \ref{lm5}, with $z=x+(x/\sqrt{n}),$ we obtain
\begin{eqnarray*}
|V_n(f^{\prime},x)|&\leq&\frac{\lambda x(1+x)}{n+1}\int_z^{\infty}\bigvee_x^t
(f_x^{\prime})(t-x)^{-2}dt +\int_x^z \bigvee_x^t(f_x^{\prime})dt\\
&\leq&\frac{\lambda x(1+x)}{n+1}\int_z^{\infty}\bigvee_x^t(f_x^{\prime})
(t-x)^{-2}dt+\frac{x}{\sqrt{n}} \bigvee_x^{x+(x/\sqrt{n})}(f_x^{\prime}).
\end{eqnarray*}
By substitution of $v=\frac{x}{t-x},$ we get
\begin{eqnarray*}
\frac{\lambda x(1+x)}{n+1}\int_{x+(x/\sqrt{n})}^{\infty}\bigvee_x^t(f_x^{\prime})
(t-x)^{-2}dt&=& \frac{\lambda (1+x)}{n+1}\int_{0}^{\sqrt{n}}\bigvee_x^{x+(x/v)}
(f_x^{\prime})dv\\&\leq& \frac{\lambda (1+x)}{n+1}\sum_{k=0}^{[\sqrt{n}]}\int_k^{k+1}
\bigvee_x^{x+(x/v)}(f_x^{\prime})dv\\&\leq& \frac{\lambda (1+x)}{n+1}\sum_{k=0}
^{[\sqrt{n}]}\bigvee_x^{x+(x/k)}(f_x^{\prime}).
\end{eqnarray*}
Thus, we obtain
\begin{eqnarray}\label{p7}
|V_n(f^{\prime},x)|&\leq&\frac{\lambda (1+x)}{n+1}\sum_{k=0}
^{[\sqrt{n}]}\bigvee_x^{x+(x/k)}(f_x^{\prime})+\frac{x}{\sqrt{n}}
\bigvee_x^{x+(x/\sqrt{n})}(f_x^{\prime}).
\end{eqnarray}
From (\ref{p5})-(\ref{p7}), we get the required result.
\end{proof}

{\bf Acknowledgements}
The authors are extremely grateful to the reviewers for a careful reading of the manuscript
and making helpful suggestions leading to a better presentation of the paper. The second
author is thankful to the "Council of Scientific and Industrial Research"
India for financial support to carry out the above research work.

\end{document}